\newtheorem{theorem}{Theorem}[section]
\newtheorem{proposition}[theorem]{Proposition}
\newtheorem{corollary}[theorem]{Corollary}
\newtheorem{lemma}[theorem]{Lemma}
\newtheorem{example}[theorem]{Example}
\newtheorem{remark}[theorem]{Remark}
\renewcommand\Re{\operatorname{Re}}
\numberwithin{equation}{section}
\theoremstyle{definition}
\title
[Inhomogeneous Oscillatory Integrals and Global Smoothing Effects]
 {Inhomogeneous Oscillatory Integrals and Global Smoothing Effects for Dispersive Equations}
\author{Tianxiao Huang, Shanlin Huang and Quan Zheng}
\address{Tianxiao Huang, School of Mathematics and Statistics, Huazhong University of Science and Technology, Wuhan, Hubei, 430074, P.R.China}
\email{htx@hust.edu.cn}
\address{Shanlin Huang\footnote{Corresponding author.}, School of Mathematics and Statistics, Huazhong University of Science and Technology, Wuhan, Hubei, 430074, P.R.China}
\email{shanlin\_huang@hust.edu.cn}
\address{Quan Zheng, School of Mathematics and Statistics, Huazhong University of Science and Technology, Wuhan, Hubei, 430074, P.R.China}
\email{qzheng@hust.edu.cn}
\date{\today}
\subjclass[2010]{42B20, 42B37, 37L50, 35B65.}
\keywords{Oscillatory integrals, Dispersive equations, $L^{p}-L^q$ estimates.}
\begin{document}

\maketitle
\begin{abstract}
We study oscillatory integrals of the type ${\mathcal F}^{-1}(e^{ita(\cdot)}\psi(\cdot))$ where $a$ is a general function satisfying some elliptic type and non-degenerate conditions at both the origin and the infinity, and $\psi$ belongs to some symbol class. Point-wise estimates in space-time are gained with partial sharpness. As applications, global smoothing effects of $L^p-L^q$ as well as Strichartz type for dispersive equations are studied. An application to fractional Schr\"odinger equations is also given.
\end{abstract}


\section{Introduction}
\setcounter{equation}{0}
This paper is a following study of Kenig, Ponce and Vega \cite{KPG}. There, the local and global smoothing effects for a class of dispersive Cauchy problems
\begin{equation}\label{eq01}
\begin{split}
\begin{cases}\partial_tu-ia(D)u=0,\quad x\in\mathbb{R}^n,~t\in\mathbb{R},\\
u(x,0)=u_0(x),
\end{cases}
\end{split}
\end{equation}
were studied, where $D=-i\nabla_x$ and $a(D)$ is defined through its Fourier symbol $a(\xi)$. Focusing on the global smoothing effects, it was shown, for example in dimension one, that if $a(\xi)$ is a real polynomial of degree $m\geq2$,
\begin{equation}\label{eq02}
W_\gamma(t)u_0(x):=\int_\mathbb{R}e^{i(ta(\xi)+x\xi)}|a''(\xi)|^\frac\gamma2\hat{u}_0(\xi)d\xi,\quad (x,t)\in\mathbb{R}^2,
\end{equation}
where $\gamma\geq0$ and $\hat{u}_0$ denotes the Fourier transform of $u_0$, then for any $\theta\in[0,1]$ we have
\begin{equation}\label{eq03}
||W_{\theta/2}(t)u_0||_{L_t^q(\mathbb{R};~L_x^p)}\leq C||u_0||_{L_x^2}.
\end{equation}
By interpolation, the key ingredient to prove (\ref{eq03}) is the following dispersive estimate:
\begin{equation}\label{eq04}
||W_1(t)u_0||_{L_x^\infty}\leq C|t|^{-\frac12}||u_0||_{L_x^1},
\end{equation}
which roughly means that the solution $u$ to (\ref{eq01}) has $\frac{m-2}{2}$ derivatives in $L_x^\infty$ if $u_0\in L_x^1$. The proof is to show that the "$|a''|^{1/2}$-derivative" of the convolution kernel of $W_1(t)$, appearing as an oscillatory integral, has the decay $|t|^{-1/2}$ uniformly in the space variable. This result was also generalized for a class of phase functions $a$ in \cite{KPG}, but not in the higher dimensional version, which would be much harder and was also established there, while $a$ is assumed to be a real polynomial of degree $m\geq2$ having (non-degenerate) elliptic principle part, $a''$ in (\ref{eq02}) is replaced by the Hessian $Ha$, and the integrand is cut off away from the origin. A special case that $a$ is non-elliptic was also considered. We refer to \cite{KPG} for the motivations and more thorough statements, as well as the local smoothing effect which is out of the scope of the present paper.

The main ingredient of this paper is to show, in a very general setting of $a(\xi)$ more than the type of polynomial, that even in higher dimensions, there are actually much more global smoothing effects for (\ref{eq01}) than those associated with $Ha$, by studying the oscillatory integral
\begin{equation}\label{eq05}
I(t,x)=\int_{\mathbb{R}^n} e^{i(ta(\xi)+x\cdot\xi)}\psi(\xi)d\xi,\quad t\in\mathbb{R}\setminus\{0\},~x\in\mathbb{R}^n,
\end{equation}
where $\psi$ stands for the smoothing. One of the features in this study claims that, if the growth of $\psi$ differs from that of $Ha$, then $I$ may have decays both in $|t|$ and in $|x|$. This leads to more possible types of estimates for the solution $u$ to (\ref{eq01}), typically the $L^p-L^q$ type estimates. The other important feature is the generality of $a$ in our main result Theorem \ref{th1}. To give a first sight, let's recall an interesting result due to Ben-Artzi, Koch and Saut \cite{AKS}:

\begin{theorem}\label{th0}
If $a(\xi)=|\xi|^4+|\xi|^2$, $\psi(\xi)=\xi^\alpha$ for any multi-index $\alpha$, then
\begin{equation*}
\begin{split}
|I(t,x)|\leq\begin{cases}C|t|^{-\frac{n+|\alpha|}{4}}(1+|t|^{-\frac14}|x|)^{-\frac{n-|\alpha|}{3}}~&\text{if}~0<|t|\leq1~\text{or}~|x|\geq|t|,\\
C|t|^{-\frac{n+|\alpha|}{2}}(1+|t|^{-\frac12}|x|)^{|\alpha|}~&\text{if}~|t|>1~\text{and}~|x|<|t|.
\end{cases}
\end{split}
\end{equation*}
\end{theorem}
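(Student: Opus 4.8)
The plan is to prove Theorem \ref{th0} by a Littlewood--Paley decomposition in frequency adapted to the two parabolic scalings carried by $a(\xi)=|\xi|^4+|\xi|^2$: at frequency $|\xi|\sim\lambda$ the quartic part dominates when $\lambda\gtrsim1$ (scaling $\xi\mapsto|t|^{-1/4}\xi$) and the quadratic part dominates when $\lambda\lesssim1$ (scaling $\xi\mapsto|t|^{-1/2}\xi$). After replacing $t$ by $-t$ if necessary we may assume $t>0$; write $I=\sum_{j\in\mathbb Z}I_j$ where $I_j$ is the piece of (\ref{eq05}) localized to $|\xi|\sim2^j$ by a fixed cutoff $\chi$ supported in $\{1/2\le|\eta|\le2\}$. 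Rescaling $\xi=2^j\eta$ turns each $I_j$ into
\[
I_j(t,x)=2^{j(n+|\alpha|)}\,J\big(t2^{4j},\,t2^{2j},\,2^j|x|;\,x/|x|\big),\qquad J(A,B,c;e)=\int_{\mathbb R^n}e^{i(A|\eta|^4+B|\eta|^2+c\,e\cdot\eta)}\eta^\alpha\chi(\eta)\,d\eta,
\]
so everything reduces to estimating the model oscillatory integral $J(A,B,c)$, uniformly in the unit vector $e$, with parameters $A,B\ge0$ and $c\ge0$.

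The key lemma I would establish is: for suitable absolute $K$ and every $N$,
\[
|J(A,B,c)|\le C(1+A+B+c)^{-n/2}\quad\text{always},\qquad |J(A,B,c)|\le C_N(1+A+B+c)^{-N}\ \text{if}\ c\notin[K^{-1}(A+B),\,K(A+B)].
\]
The phase $\phi(\eta)=A|\eta|^4+B|\eta|^2+c\,e\cdot\eta$ has $\nabla\phi=(4A|\eta|^2+2B)\eta+ce$, and on $\operatorname{supp}\chi$ one has $(4A|\eta|^2+2B)|\eta|\sim A+B$; hence $|\nabla\phi|\gtrsim c$ when $c\gg A+B$ and $|\nabla\phi|\gtrsim A+B$ when $c\ll A+B$. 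Since all derivatives of $\phi$ of order $\ge1$ are $O(A+B+c)$, repeated integration by parts with $|\nabla\phi|^{-2}\nabla\phi\cdot\nabla$ gains a factor $(1+A+B+c)^{-1}$ per step, yielding the rapid-decay bound in the non-resonant range. In the resonant range $c\sim A+B$ (when these are $\gtrsim1$; otherwise trivially $J=O(1)$) the unique stationary point is $\eta_*=-re$ with $(4Ar^2+2B)r=c$ and $r\sim1$, where the Hessian $\operatorname{Hess}\phi=(4A|\eta|^2+2B)\,\mathrm{Id}+8A\,\eta\otimes\eta$ has all eigenvalues comparable to $A+B$; stationary phase then gives $|J|\lesssim(A+B)^{-n/2}$, and combining cases proves the lemma. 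I expect this van der Corput estimate with parameters to be the main technical obstacle, chiefly in making the non-stationary estimate uniform in all three parameters and in handling the resonant stationary point when it lies near $\partial(\operatorname{supp}\chi)$ (absorbed by the finite overlap of the dyadic pieces).

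Granting the lemma, it remains to sum $\sum_j 2^{j(n+|\alpha|)}J(t2^{4j},t2^{2j},2^j|x|)$, which is elementary but requires care. Writing $\lambda=t$, $\rho=|x|$, each summand is $\lesssim 2^{j(n+|\alpha|)}(1+\lambda2^{4j}+\lambda2^{2j}+2^j\rho)^{-n/2}$ with extra rapid decay unless $2^j\rho\sim\lambda2^{4j}+\lambda2^{2j}$. The sum is governed by the transition scales $2^j\sim\lambda^{-1/4}$ and $2^j\sim\lambda^{-1/2}$ and the resonant scales $2^j\sim(\rho/\lambda)^{1/3}$ and $2^j\sim\rho/\lambda$; low frequencies are harmless because of the factor $2^{j(n+|\alpha|)}$, and frequencies past resonance are harmless by rapid decay. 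When $0<t\le1$ or $|x|\ge|t|$ the dominant frequencies satisfy $2^j\gtrsim1$, so the quartic term controls the decay factor and one recovers $|I|\lesssim|t|^{-(n+|\alpha|)/4}(1+|t|^{-1/4}|x|)^{-(n-|\alpha|)/3}$; when $t>1$ and $|x|<|t|$ the dominant frequencies satisfy $2^j\lesssim1$, the quadratic term controls the decay, and (splitting further according to whether $|x|$ is smaller or larger than $|t|^{1/2}$, i.e.\ whether the quadratic stationary phase is active) one recovers $|I|\lesssim|t|^{-(n+|\alpha|)/2}(1+|t|^{-1/2}|x|)^{|\alpha|}$. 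A useful sanity check is that the two bounds agree on the common boundary $|t|=1$ and on $|x|=|t|$, each giving $|t|^{-n/2}$ there.

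Finally I would remark that, since $\psi=\xi^\alpha$ yields $I=(-i\partial_x)^\alpha I_0$ with $I_0$ the $\alpha=0$ integral (radial in $x$), one could alternatively treat the $\alpha=0$ case and differentiate; and that Theorem \ref{th0} is in any case a special instance of the point-wise bounds in Theorem \ref{th1}, to which one may appeal directly once $a(\xi)=|\xi|^4+|\xi|^2$ is checked to satisfy the required elliptic and non-degeneracy hypotheses at both the origin and infinity.
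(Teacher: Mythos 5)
Your proposal is correct in outline, and it takes a genuinely different route from the paper. The paper treats Theorem \ref{th0} as a special case of the general Theorem \ref{th1}: in Example \ref{ex1} one takes $a(\xi)=|\xi|^4+|\xi|^2$, $\psi(\xi)=\xi^\alpha$, $t_0=N=1$, and the estimate drops out. Theorem \ref{th1} itself is built from Lemma \ref{lm1} (high frequencies) and Lemma \ref{lm2} (low frequencies), which first cut frequency space at a fixed radius $r_0$, then split the integrand according to whether $\nabla a(\xi)+x/t$ is small or large; the stationary region is handled by a conic and annular decomposition (at the single scale $r=|x/t|^{1/(m-1)}$), and the non-stationary region by repeated integration by parts with a Leibniz calculus for the amplitude. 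Your scheme is instead a full Littlewood--Paley dyadic decomposition of $|\xi|$, rescaling each piece to the three-parameter model integral $J(A,B,c;e)$, a uniform stationary-phase/non-stationary lemma for $J$, and then a dyadic summation driven by the resonance conditions $2^j|x|\sim t2^{4j}+t2^{2j}$. Both ultimately rest on the same dichotomy (stationary phase near the critical set, integration by parts away from it), but yours exploits the radial, sum-of-powers structure of $a$ to reduce everything to a compactly supported model integral with explicit parameters, whereas the paper's lemmas are formulated so as to apply to non-radial, non-polynomial $a$ in a symbol class and to $\psi\in S^b$, and to yield the two-regime statement of Theorem \ref{th1} with general $(m_1,b_1)$ and $(m_2,b_2)$. (The original \cite{AKS} proof, which the paper cites, is yet a third route: Bessel-function asymptotics for the angular integral plus one-dimensional stationary phase in $|\xi|$.)

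Two places in your plan deserve extra care in a full write-up. First, the ``key lemma'' needs a genuinely parameter-uniform version of stationary phase: your observation that all derivatives of $\phi$ of order $\ge 2$ are $O(A+B)$ while the Hessian eigenvalues are bounded below by $c(A+B)$ on $\operatorname{supp}\chi$ is exactly the right structural input, but you must also control the case where the stationary point $\eta_*=-re$ is near (or just outside) $\partial(\operatorname{supp}\chi)$ uniformly in $(A,B,c,e)$; using a dyadic partition $\{\chi(\cdot/2^j)\}$ with generous overlap so that any $\eta_*$ with $r\sim1$ sits well inside at least one annulus, plus treating $r\not\sim1$ as part of the non-resonant range, closes this. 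Second, in the summation you should make explicit which dyadic scale dominates in each of the four sub-regions (below/above $t^{-1/2}$, below/above $t^{-1/4}$, below/above the resonant scale $(|x|/t)^{1/3}$ and the quadratic resonant scale $|x|/t$); the factor $2^{j(n+|\alpha|)}$ makes the low-frequency tail summable, but for $|\alpha|\ge n$ the resonant sum $\sum 2^{j(|\alpha|-n)}(t2^{4j})^{-n/2}$ would diverge if one only used the $(1+A+B+c)^{-n/2}$ bound, so you must invoke the rapid-decay bound on both sides of the resonance to localize to $O(1)$ terms. Neither issue is a gap in principle, only a place where the details are not yet pinned down.
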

We shall recover this result in Example \ref{ex1}. Since the phase function $a$ is radial, in the polar coordinates, the proof in \cite{AKS} uses estimates of the Bessel functions for the sphere part, and the stationary phase argument for the radial part. What contribute in the radial part are the quantities of $\partial_sa$, $\partial_s^2a$ and $\psi$ where $s=|\xi|$, both near and away from the origin. This suggests us to study (\ref{eq05}) in a compact domain and its complement, which indicates the class of phase $a$ to have different characteristics at some finite point and at the infinity, besides the variety of $\psi$. For example, $a$ behaves like $|\xi|^{m_1}$ at $0$ and behaves like $|\xi|^{m_2}$ at infinity, where $m_1,~m_2>1$ can be real numbers.

The authors don't know if there exist general studies covering the above result. For $\psi$ supported away from the origin, A. Miyachi studied general radial phases in \cite{Mi}, typical homogeneous phases in \cite{Mi3}, and then general homogeneous phases of degree $1$ thoroughly in \cite{Mi2} which, under its spirit, seems to imply results for general homogeneous phases of other degrees. For $\psi$ supported on $\mathbb{R}^n$, Zheng, Yao and Fan \cite{ZYF} studied homogeneous elliptic polynomial phase $a$, where $\psi=1$ and the level surface $\Sigma_a=\{\xi\in\mathbb{R}^n;~a(\xi)=1\}$ is convex of finite type. Cui \cite{C1} studied inhomogeneous elliptic polynomial phase $a$, assuming the principal part $a_m$ has non-degenerate $Ha_m$ everywhere in $\mathbb{R}^n\setminus\{0\}$, (which is equivalent to the non-vanishing Gaussian curvature of $\Sigma_{a_m}$ everywhere,) and gained short time estimates by a scaling approach, while $\psi$ is allowed to be in a symbol class. For long time results, Balabane and Emami-Rad \cite{BE} was an early attempt, gaining rough estimates but in different perspectives compared with Cui \cite{C1}. Kim, Arnold and Yao \cite{KAY} improved the long time results. Ding and Yao \cite{DY} also considered the case where $Ha$ is positive definite, under some assumptions due to Yao and Zheng \cite{YZ}. Notice that in practice, there is a large gap between the positive definiteness of $Ha$ and the non-degeneracy of $Ha_m$. The above works in inhomogeneous polynomial case are not able to recover Theorem \ref{th0}, for the lack of subtlety in treating oscillatory integrals supported on a bounded domain, which the present paper also pays attention to even for non-polynomial phase functions.

Our approach for estimating (\ref{eq05}) can be viewed as a stationary phase method regarding two parameters $|t|$ and $|x|$, which is a refinement of the treatment for higher dimensional oscillatory integrals in \cite{KPG}. Following the philosophy of stationary phase, we will require $a(\xi)$ to have quantitative elliptic features and non-degeneracy in domains concerned. However, unlike previous studies mentioned above, we emphasize that $a(\xi)$ does not have to be polynomial even in higher dimensions, and $\psi$ does not have to be supported away from the origin. As it has been pointed out in \cite{KPG}, the ellipticity of $a$ can be removed under special choices of $\psi$. (See comments after Example \ref{ex1}.)

The rest of the paper is organized as follows. In Section 2, we study oscillatory integrals supported away from and near the origin in Lemma \ref{lm1} and Lemma \ref{lm2} respectively, where subtle decompositions quantified jointly in space-time are considered crucially. These two lemmas are established in settings more general than the ones we use later, and they both have partial sharpness, (see remarks after their proofs,) thus we expect their usage beyond this work. Section 3 is divided into two subsections for the applications of the previous technicality. In Subsection 3.1, we give our main result Theorem \ref{th1} for oscillatory integrals supported on $\mathbb{R}^n$, and two following examples are given to show its generality and sharpness. (As well as by an example given in the proof of Proposition \ref{cor3.2}.) In Subsection 3.2, $L^p-L^q$ type estimates of dispersive Cauchy problems are established for reasonable smoothing functions $\psi$ and $(p,q)$ pairs, and Strichartz type estimates are derived as a corollary. At the final, we also consider the $L^p$ type estimates for the fractional Schr\"{o}dinger equations concerning some complex-valued integrable potentials.

In the sequel, for $b\in\mathbb{R}$, $k\in\mathbb{N}_0:=\{0, 1,\cdots\}$, and an open set $\Omega\subset\mathbb{R}^n$, we define $S_k^b(\Omega)$ to be the class of all functions $f\in C^k(\Omega)$ such that
\begin{equation}\label{eq1}
|\partial^\alpha f(\xi)|\leq c_\alpha|\xi|^{b-|\alpha|}\quad\mathrm{if}~\xi\in\Omega~\mathrm{and}~\alpha\in\mathbb{N}_0^n~\mathrm{with}~|\alpha|\leq k,
\end{equation}
and put $S^b(\Omega)=\cap_{k=0}^{+\infty}S_k^b(\Omega)$. Moreover, we denote by $C\geq0$ a generic constant independent of $t,~x,~\xi$ whenever these letters occur.
\section{Oscillatory integrals}\label{sec2}
For any integer $K>\frac n2$ and real number $m>1$, we consider in this section oscillatory integrals of the following type:
\begin{equation}\label{eq2}
I(t,x)=\int_\Omega e^{i(ta(\xi)+x\cdot\xi)}\psi(\xi)d\xi,\quad t\in\mathbb{R}\setminus\{0\},~x\in\mathbb{R}^n,
\end{equation}
where $a\in S_{K+1}^m(\Omega)$ is real valued, and $\psi\in S_K^b(\mathbb{R}^n\setminus\{0\})$ for some $b\in\mathbb{R}$ with $\mathrm{supp}~\psi\subset\overline
\Omega$. Here (\ref{eq2}) is understood as the inverse Fourier transform of $(2\pi)^ne^{ita(\cdot)}\psi(\cdot)$ while it defines a temperate distribution in $\mathbb{R}^n$. With specific $\Omega$ and $b$, we shall prove that $I$ coincides with a locally integrable function in the $x$ variable, and satisfies point-wise estimates in different space-time domains.

We denote $\mu_b=\frac{n(m-2)-2b}{2(m-1)}$ in the rest of this paper, and always assume in this section that there exist constants $c_1,~c_2,~c_1',~c_2'$ such that
\begin{equation}\label{eq3}
c_1|\xi|^{m-1}\leq|\nabla a(\xi)|\leq c_2|\xi|^{m-1}\quad\mathrm{if}~\xi\in\Omega,
\end{equation}
and
\begin{equation}\label{eq4}
	c_1'|\xi|^{n(m-2)}\leq|\mathrm{det}Ha(\xi)|\leq c_2'|\xi|^{n(m-2)}\quad\mathrm{if}~\xi\in\Omega.
\end{equation}
\begin{lemma}\label{lm1}
	Let $\Omega=\{\xi\in\mathbb{R}^n;~|\xi|>r_0\}$ for some $r_0>0$. For any fixed $t_0>0$,\\
	i) if $b\in[\frac{n(m-2)}{2}-K(m-1),Km-n)$, then
	\begin{equation}\label{eq5}
		\begin{split}
			|I(t,x)|\leq\begin{cases}C|t|^{-\frac n2+\mu_b}|x|^{-\mu_b}&\text{if}~|t|\geq t_0,~|t|^{-1}|x|>\frac23c_1r_0^{m-1},\\
				C|t|^{-K}              &\text{if}~|t|\geq t_0,~|t|^{-1}|x|\leq\frac23c_1r_0^{m-1};
			\end{cases}
		\end{split}
	\end{equation}
	ii) if $b\in[-\frac n2,Km-n)$, then
	\begin{equation}\label{eq6}
		|I(t,x)|\leq C|t|^{-\frac{n+b}{m}}(1+|t|^{-\frac1m}|x|)^{-\mu_b}\quad\text{if}~0<|t|<t_0,~x\in\mathbb{R}^n.
	\end{equation}
\end{lemma}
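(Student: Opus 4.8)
The strategy is a dyadic decomposition in frequency combined with a two-parameter stationary phase analysis, treating $|t|$ and $|x|$ jointly. Write $I(t,x)=\sum_{j\geq j_0}I_j(t,x)$, where $I_j$ is the piece of the integral with $|\xi|\sim 2^j$, obtained via a smooth partition of unity $1=\sum_j\chi(2^{-j}\xi)$ adapted to $\Omega=\{|\xi|>r_0\}$. On the $j$-th annulus, rescale $\xi=2^j\eta$ with $|\eta|\sim 1$: then $I_j=2^{jn}\int e^{i(ta(2^j\eta)+2^jx\cdot\eta)}\psi(2^j\eta)\chi(\eta)\,d\eta$. By the symbol hypotheses, $\psi(2^j\eta)\sim 2^{jb}$ with all $\eta$-derivatives controlled, and $a(2^j\eta)$ has gradient of size $\sim |t|2^{j(m-1)}$ (by \eqref{eq3}) and Hessian determinant of size $\sim 2^{jn(m-2)}$ (by \eqref{eq4}), uniformly on the unit annulus. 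So the effective large parameter for stationary phase in $\eta$ is $\lambda_j:=|t|2^{j(m-1)}$, with a possible linear perturbation of size $2^j|x|$ from the $x\cdot\eta$ term.

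The core estimate is the single-scale bound: for each $j$,
\begin{equation}\label{eq-single-scale-plan}
|I_j(t,x)|\leq C\,2^{j(n+b)}\min\left\{1,\ \left(|t|2^{j(m-1)}\right)^{-n/2},\ \left(|t|2^{j(m-1)}\right)^{-K}\right\}
\end{equation}
in the regime $|x|\lesssim |t|2^{j(m-1)}$ (no stationary point competition), and
\begin{equation}\label{eq-nonstat-plan}
|I_j(t,x)|\leq C_N\,2^{j(n+b)}\left(2^j|x|\right)^{-N}
\end{equation}
when $2^j|x|\gg |t|2^{j(m-1)}$, i.e.\ when the phase $ta(2^j\eta)+2^jx\cdot\eta$ has no critical point on the annulus and one integrates by parts in $\eta$. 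The first bound of \eqref{eq-single-scale-plan} is trivial; the $(|t|2^{j(m-1)})^{-n/2}$ factor is the van der Corput / stationary phase estimate using the non-degenerate Hessian \eqref{eq4}, and the $(|t|2^{j(m-1)})^{-K}$ factor comes from the non-stationary phase integration by parts $K$ times away from the (essentially unique, by ellipticity \eqref{eq3}) critical point — one needs the critical point to stay in a fixed annulus, which is exactly where \eqref{eq3} is used to pin $|\nabla a|$ from below, and \eqref{eq4} to control the location via the inverse function theorem.

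The remaining work is summation over $j$. For part (i) with $|t|\geq t_0$: the critical scale is $2^{j_c(m-1)}\sim |x|/|t|$, i.e.\ $2^{j_c}\sim(|x|/|t|)^{1/(m-1)}$; when $|t|^{-1}|x|>\frac23 c_1 r_0^{m-1}$ this critical scale lies in the range $j\geq j_0$ (this is the meaning of the threshold $\frac23 c_1 r_0^{m-1}$, coming from \eqref{eq3} at $|\xi|=r_0$), so one sums \eqref{eq-single-scale-plan} and \eqref{eq-nonstat-plan} around $j_c$: the pieces with $j<j_c$ are summed using the non-stationary bound \eqref{eq-nonstat-plan}, those with $j>j_c$ using \eqref{eq-single-scale-plan} with the $-K$ exponent (which converges since $b<Km-n$), and the dominant term at $j=j_c$ produces $2^{j_c(n+b)}(\lambda_{j_c})^{-n/2}\sim |t|^{-n/2+\mu_b}|x|^{-\mu_b}$ after substituting $2^{j_c}\sim(|x|/|t|)^{1/(m-1)}$ and simplifying with $\mu_b=\frac{n(m-2)-2b}{2(m-1)}$; the condition $b\geq\frac{n(m-2)}{2}-K(m-1)$ is what guarantees the geometric series on the low-$j$ side (controlled by \eqref{eq-nonstat-plan} interpolated with \eqref{eq-single-scale-plan}) also converges to the same order. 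When instead $|t|^{-1}|x|\leq\frac23 c_1 r_0^{m-1}$, there is no critical scale in range, the whole sum is non-stationary in the $a$-phase, and summing the $(\lambda_j)^{-K}=(|t|2^{j(m-1)})^{-K}$ bound over $j\geq j_0$ gives $C|t|^{-K}$ (geometric in $j$ since $b-K(m-1)+n<0$... here using $b<Km-n$), yielding the second line of \eqref{eq5}. For part (ii) with $0<|t|<t_0$: now $I$ is genuinely a function (the $\psi$ near-infinity decay plus $b<Km-n$ makes things integrable after enough integration by parts), and the roles of the regimes switch — when $|t|^{-1/m}|x|$ is small the critical scale is $2^{j_c}\sim |t|^{-1/m}$ and summing \eqref{eq-single-scale-plan} from $j_0$ up to $j_c$ (using the flat bound $2^{j(n+b)}$ where $b\geq -n/2$ ensures this grows, so it's dominated by the top term $2^{j_c(n+b)}=|t|^{-(n+b)/m}$) gives the leading $|t|^{-(n+b)/m}$; when $|t|^{-1/m}|x|$ is large one balances against \eqref{eq-nonstat-plan} to extract the extra decay $(|t|^{-1/m}|x|)^{-\mu_b}$, exactly as in part (i) but with $|t|^{1/m}$ replacing $|x|/|t|$ scaling — the unified statement being \eqref{eq6}.

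The main obstacle I anticipate is twofold. First, making the single-scale stationary-phase estimate \eqref{eq-single-scale-plan} uniform in $j$ requires that the rescaled phase $\eta\mapsto a(2^j\eta)/2^{jm}$ lies in a bounded set of $C^{K+1}$ symbols on the unit annulus with uniformly non-degenerate Hessian — this follows from $a\in S_{K+1}^m(\Omega)$ together with \eqref{eq3} and \eqref{eq4}, but one must check the van der Corput constants genuinely do not depend on $j$, and that the non-stationary integration-by-parts gains the full $(\lambda_j)^{-K}$ rather than a weaker power. Second, and more delicate, is the bookkeeping that shows the boundary cases of the $b$-interval — $b=\frac{n(m-2)}{2}-K(m-1)$ in (i) and $b=-n/2$ in (ii) — are exactly the thresholds at which the low-frequency (respectively, the competing-regime) geometric sums remain summable and contribute at most the claimed order rather than a logarithmic loss; at those endpoints a $\log$ could in principle appear and one must verify it is absorbed, e.g.\ by interpolating \eqref{eq-single-scale-plan} and \eqref{eq-nonstat-plan} so that each dyadic piece already carries a small negative power of $2^j|x|/\lambda_j$. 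Handling the threshold $\frac23 c_1 r_0^{m-1}$ correctly — i.e.\ verifying that above it the critical radius exceeds $r_0$ so that $I_{j_0}$ (the innermost incomplete annulus touching $\partial\Omega$) is non-stationary and harmless, and below it every annulus is non-stationary in the $a$-phase — is the geometric heart of why the two cases in \eqref{eq5} split where they do.
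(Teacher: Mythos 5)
Your proposal takes a genuinely different route from the paper. The paper does \emph{not} decompose dyadically in frequency; it splits $\Omega$ directly into $\Omega_1=\{|\nabla a+\frac xt|<\frac12|\frac xt|\}$ (near the stationary set) and $\Omega_2=\{|\nabla a+\frac xt|>\frac14|\frac xt|\}$. On $\Omega_2$ it integrates by parts $K$ times and then chooses a single interpolation parameter $\theta\in[0,1]$ so that the resulting bound $|t|^{-K(1-\theta)}|x|^{-K\theta}\int|\xi|^{b-K(1-\theta)(m-1)-K}$ both converges and has the right homogeneity; on $\Omega_1$ it performs a conic/annular decomposition to get a quantitative lower bound $|\nabla a(\xi)-\nabla a(\xi')|\geq cr^{m-2}|\xi-\xi'|$, picks a quasi-critical point $\xi_0$, and splits into a ball of radius $r_2=|t|^{-1/2(m-1)}|x|^{(2-m)/2(m-1)}$ (estimated by measure) and its complement (integration by parts). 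Your dyadic decomposition and single-scale stationary phase is a legitimate alternative, and replaces the paper's $\theta$-interpolation by a geometric sum over scales; the trade-off is that you must now control convergence of several geometric series and verify that the endpoints of the $b$-range do not produce logarithmic losses, an issue the paper's single-$\theta$ choice avoids entirely.

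However, there is a concrete arithmetic error in your core single-scale bound \eqref{eq-single-scale-plan}, and you partially flag it yourself. The effective large parameter for stationary phase on the rescaled annulus $|\eta|\sim1$ is \emph{not} $\lambda_j=|t|2^{j(m-1)}$: the phase $\Phi(\eta)=ta(2^j\eta)+2^jx\cdot\eta$ has $\nabla_\eta\Phi\sim|t|2^{jm}+2^j|x|$ and $\det H_\eta\Phi\sim|t|^n2^{2jn}\cdot2^{jn(m-2)}=|t|^n2^{jnm}$, so the correct parameter is $|t|2^{jm}$. (Your $\lambda_j$ is the size of $\nabla_\xi a$ at scale $2^j$, which is off by $2^j$ once you rescale.) With your $\lambda_j$, the non-stationary bound $2^{j(n+b)}\lambda_j^{-K}=|t|^{-K}2^{j(n+b-K(m-1))}$ sums only when $b<K(m-1)-n$, which is strictly stronger than the hypothesis $b<Km-n$ --- exactly the mismatch you notice parenthetically (``since $b-K(m-1)+n<0$... here using $b<Km-n$'') but do not resolve. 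Replacing $\lambda_j$ by $|t|2^{jm}$ fixes both branches: the non-stationary bound becomes $|t|^{-K}2^{j(n+b-Km)}$ (summable iff $b<Km-n$, as required), and the stationary bound $2^{j(n+b)}(|t|2^{jm})^{-n/2}=|t|^{-n/2}2^{j(b-n(m-2)/2)}$ evaluated at $2^{j_c}\sim(|x|/|t|)^{1/(m-1)}$ gives exactly $|t|^{-n/2+\mu_b}|x|^{-\mu_b}$. Two further gaps relative to the paper: (a) you do not address the regularization step (the paper replaces $\psi$ by $e^{-\epsilon|\xi|^2}\psi$ and passes to the limit, which is needed since for positive $b$ the integral is a priori only a tempered distribution), and (b) ``stationary phase on the critical annulus'' hides the spatial split into a ball of radius $r_2$ around the critical point plus its complement --- the heart of the paper's Step 2 --- together with the conic decomposition needed to make the Hessian lower bound usable; this should be spelled out, especially since it is also where the assumption $K>\frac n2$ enters.
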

\begin{proof}
For $\epsilon\in(0,1)$, define $\psi_\epsilon(\xi)=e^{-\epsilon|\xi|^2}\psi(\xi)$ and $I_\epsilon(t,x)$ with $\psi$ replaced by $\psi_\epsilon$ in (\ref{eq2}). We have for every $t\neq0$ that $I_\epsilon(t,\cdot)\in C(\mathbb{R}^n)$ and that $I_\epsilon(t,\cdot)\rightarrow I(t,\cdot)$ as $\epsilon\rightarrow0$ in the sense of distribution. It suffices to prove (\ref{eq5}) and (\ref{eq6}) for $I_\epsilon$ with constants $C$'s independent of $\epsilon$, since then $I(t,\cdot)$ is identified with an absolutely continuous Radon measure with the density $I$ satisfying each estimate. However, one checks that $\psi_\epsilon\in S_K^b(\mathbb{R}^n\setminus\{0\})$ has $K$ semi-norms defined in (\ref{eq1}) all bounded uniformly in $\epsilon$, and these bounds will be the only quantities we need for $\psi_\epsilon$. So it is reasonable to abuse our notations of $I_\epsilon$ and $\psi_\epsilon$ by using $I$ and $\psi$ instead to the end of this proof.
	
We first write $\Omega=\Omega_1\cup\Omega_2$ where
\begin{equation*}
\Omega_1=\{\xi\in\Omega;~\mbox{$|\nabla a(\xi)+\frac xt|<\frac12|\frac xt|$}\}
\end{equation*}
and
\begin{equation*}
\Omega_2=\{\xi\in\Omega;~\mbox{$|\nabla a(\xi)+\frac xt|>\frac14|\frac xt|$}\}.
\end{equation*}
Choose the following partition of unity subordinate to this covering:
\begin{equation*}
\varphi_1(\xi)=\varphi((\nabla a(\xi)+\mbox{$\frac xt$})/\mbox{$\frac12|\frac xt|$})~\mathrm{and}~\varphi_2(\xi)=1-\varphi_1(\xi),
\end{equation*}
where $\varphi\in C_c^\infty(\mathbb{R}^n)$ such that $\varphi(\xi)=1$ if $|\xi|\leq\frac12$ and $\varphi(\xi)=0$ if $|\xi|\geq1$. Observe that by (\ref{eq3}) we have
\begin{equation}\label{eq7}
\Omega_1\subset\{\xi\in\mathbb{R}^n;~(1/2c_2)^\frac{1}{m-1}r\leq|\xi|\leq(3/2c_1)^\frac{1}{m-1}r\},
\end{equation}
where $r:=|\frac xt|^{1/(m-1)}$, thus $|\partial^\alpha\varphi_j|\leq C|\xi|^{-|\alpha|}$ for $|\alpha|\leq K$, and $I(t,x)$ is split into
\begin{equation*}
I_j(t,x)=\int_{\Omega_j}e^{i(ta(\xi)+x\cdot\xi)}\varphi_j(\xi)\psi(\xi)d\xi,\quad j=1,2.
\end{equation*}
We shall prove (\ref{eq5}) and (\ref{eq6}) with $I$ replaced by $I_j$, $j=1,~2$.

{\bf Step 1: Estimates for $I_2$.} We write $\Omega_2=\bigcup_{j=1}^nU_j$ where
\begin{equation}\label{eq8}
U_j=\{\xi\in\Omega_2;~|\partial_ja(\xi)+\mbox{$\frac{x_j}{t}$}|>\mbox{$\frac{1}{\sqrt{2n}}$}|\nabla a(\xi)+\mbox{$\frac xt$}|\},
\end{equation}
and choose the following partition of unity of $\Omega_2$ subordinate to this covering: $\eta_j=\zeta_j/\Sigma_{l=1}^n\zeta_l$ ($j=1,~\cdots,~n$) where
\begin{equation*}
\zeta_j(\xi)=\zeta(\sqrt{2n}\left(\partial_j a(\xi)+\mbox{$\frac{x_j}{t}$}\right)\big/|\nabla a(\xi)+\mbox{$\frac xt$}|),\quad\xi\in\Omega_2,
\end{equation*}
while $\zeta\in C^\infty(\mathbb{R})$ such that $\zeta(s)=1$ if $|s|\geq1$ and $\zeta(s)=0$ if $|s|\leq\frac12$. Notice that in $\Omega_2$ we have
\begin{equation}\label{eq9}
|\nabla a(\xi)|<5|\nabla a(\xi)+\mbox{$\frac xt$}|,
\end{equation}
it follows that $|\partial^\alpha\eta_j|\leq C_\alpha|\xi|^{-|\alpha|}$ for $|\alpha|\leq K$ by induction, because the terms of $\partial^\alpha\zeta_l$ ($\alpha\neq0;~l=1,~\cdots,~n$) are of the form
\begin{equation}\label{eq10}
C\zeta^{(k)}\left(\frac{\sqrt{2n}(\partial_la+\frac{x_l}{t})}{|\nabla a+\frac xt|}\right)\frac{(\nabla a+\frac xt)^{\beta}}{|\nabla a+\frac xt|^{M+|\beta|}}\Pi_{j=1}^{M}(\partial^{\beta^{(j)}}a),
\end{equation}
where $k,~M\leq|\alpha|$; $\beta,~\beta^{(j)}\in\mathbb{N}^n$ ($j=1,~\cdots,~M$), $(\nabla a+\frac xt)^{\beta}:=\Pi_{j=1}^n(\partial_ja+\frac{x_j}{t})^{\beta_j}$ and $M+|\alpha|=\Sigma_{j=1}^{M}|\beta^{(j)}|$.
Now it suffices to estimate the integral
\begin{equation*}
I_{21}(t,x)=\int_{U_1}e^{i(ta(\xi)+x\cdot\xi)}\varphi_2(\xi)\eta_1(\xi)\psi(\xi)d\xi.
\end{equation*}
Set $D_*f:=\partial_1(gf)$ for $f\in C^K(\mathbb{R}^n)$, where $g=(it\partial_1a+ix_1)^{-1}$. Then by induction one obtains
\begin{equation*}
D_*^jf=\sum_{\alpha}a_\alpha(\partial_1^{\alpha_1}g)\cdots (\partial_1^{\alpha_j}g)(\partial_1^{\alpha_{j+1}}f),
\end{equation*}
for $j\leq K$, where the sum runs over all $\alpha=(\alpha_1,~\cdots,~\alpha_{j+1})\in\mathbb{N}_0^{j+1}$ such that $|\alpha|=j$ and $0\leq\alpha_1\leq\cdots\leq\alpha_j$. The terms of $\partial_1^jg$ are of the form
\begin{equation}\label{eq11}
Ct^{-1}\frac{(\partial_1^2a)^{m_2}\cdots(\partial_1^{j+1}a)^{m_{j+1}}}{(\partial_1a+\frac{x_1}{t})^{1+m_2+\cdots+m_{j+1}}},
\end{equation}
where $\Sigma_{l=2}^{j+1}(l-1)m_l=j$ and $\Sigma_{l=2}^{j+1}m_l\leq j$, thus by (\ref{eq9}) and (\ref{eq11}) one checks that for $j\leq K$ we have $|\partial_1^jg|\leq C|t|^{-1}|\xi|^{1-m-j}$, $|\partial_1^jg|\leq C|x|^{-1}|\xi|^{-j}$, and therefore
\begin{equation*}
|\partial_1^jg|\leq C|t|^{-(1-\theta)}|x|^{-\theta}|\xi|^{-(1-\theta)(m-1)-j},
\end{equation*}
for all $\theta\in[0,1]$. Notice that $|\partial_1^j(\varphi_2\eta_1\psi)|\leq C|\xi|^{b-j}$, we thus have
\begin{equation*}
|D_*^j(\varphi_2\eta_1\psi)|\leq C|t|^{-j(1-\theta)}|x|^{-j\theta}|\xi|^{b-j(1-\theta)(m-1)-j}.
\end{equation*}
Integration by parts for $K$ times leads to
\begin{equation}\label{eq12}
\begin{split}
|I_{21}|=&\left|\int_{U_1}e^{i(ta(\xi)+x\cdot\xi)}D_*^K(\varphi_2\eta_1\psi)d\xi\right|\\
\leq& C|t|^{-K(1-\theta)}|x|^{-K\theta}\int_{|\xi|>r_0}|\xi|^{b-K(1-\theta)(m-1)-K}d\xi.
\end{split}
\end{equation}
	
To prove (\ref{eq5}) for $I_2$, when $|t|\geq t_0$ and $|t|^{-1}|x|\leq2c_1r_0^{m-1}/3$, we choose $\theta=0$, so that $b-K(1-\theta)(m-1)-K=b-Km<-n$, and therefore
\begin{equation}\label{eq13}
|I_{21}|\leq C|t|^{-K}.
\end{equation}
When $|t|\geq t_0$ and $|t|^{-1}|x|>2c_1r_0^{m-1}/3$, notice that
\begin{equation*}
\begin{split}
&[\mbox{$\frac{n(m-2)}{2}$}-K(m-1),Km-n)\\
=&\bigcup_{\theta\in[0,1]}[-K(m-1)\theta+\mbox{$\frac{n(m-2)}{2}$},-K(m-1)\theta+Km-n),
\end{split}
\end{equation*}
there exists $\theta_0\in[0,1]$ such that
\begin{equation*}
\begin{cases}
b-K(1-\theta_0)(m-1)-K<-n,\\
\frac{b-K(1-\theta_0)(m-1)+\frac n2}{m-1}+K-\frac n2\geq0,
\end{cases}
\end{equation*}
thus we can choose $\theta=\theta_0$, and have by (\ref{eq12}), and by the assumption $K>\frac n2$, that
\begin{equation}\label{eq14}
\begin{split}
|I_{21}|\leq& C|t|^{-\frac n2+\mu_b}|x|^{-\mu_b}|t|^{-(K-\frac n2)}|\mbox{$\frac xt$}|^{-\left(\frac{b-K(1-\theta_0)(m-1)+\frac n2}{m-1}+K-\frac n2\right)}\\
\leq& C|t|^{-\frac n2+\mu_b}|x|^{-\mu_b}.
\end{split}
\end{equation}
	
To prove (\ref{eq6}) for $I_2$, we split $I_{21}$ into
\begin{equation*}
\begin{split}
I_{21}(t,x)=& I_{211}(t,x)+I_{212}(t,x)\\
=&\int_{U_1}e^{i(ta(\xi)+x\cdot\xi)}\widetilde{\varphi}(\xi)\varphi_2(\xi)\eta_1(\xi)\psi(\xi)d\xi\\
&\quad+\int_{U_1}e^{i(ta(\xi)+x\cdot\xi)}(1-\widetilde{\varphi}(\xi))\varphi_2(\xi)\eta_1(\xi)\psi(\xi)d\xi,
\end{split}
\end{equation*}
where $\widetilde{\varphi}(\xi)=\varphi(\frac{\xi}{r_1})$, and $r_1$ is to be chosen in different domains of $(t,x)$. We have $|\partial_1^j\widetilde{\varphi}(\xi)|\leq C|\xi|^{-j}$ and thus $|\partial_1^j((1-\widetilde{\varphi}(\xi))\varphi_2(\xi)\eta_1(\xi)\psi(\xi))|\leq C|\xi|^{b-j}$ for $j\leq K$. Obviously
\begin{equation}\label{eq15}
|I_{211}|\leq C\int_{|\xi|\leq r_1}|\xi|^bd\xi\leq Cr_1^{n+b}.
\end{equation}
Similar to (\ref{eq12}), we also have
\begin{equation}\label{eq16}
\begin{split}
|I_{212}|=&\left|\int_{U_1}e^{i(ta(\xi)+x\cdot\xi)}D_*^K((1-\widetilde{\varphi})\varphi_2\eta_1\psi)d\xi\right|\\
\leq& C|t|^{-K(1-\theta)}|x|^{-K\theta}r_1^{b-K(1-\theta)(m-1)-K+n}\int_{|\xi|>\frac12}|\xi|^{b-K(1-\theta)(m-1)-K}d\xi.
\end{split}
\end{equation}
When $0<|t|<t_0$ and $|t|^{-\frac1m}|x|>1$, we choose $r_1=|t|^{-\frac{n+2b}{2(n+b)(m-1)}}|x|^{-\frac{\mu_b}{n+b}}$ in (\ref{eq15}) and (\ref{eq16}). Observe that
\begin{equation*}
[\mbox{$-\frac n2$},Km-n)=\bigcup_{\theta\in[0,1]}[\mbox{$-\frac n2+\frac{n(m-1)(1-\theta)}{2}$},K(1-\theta)(m-1)+K-n),
\end{equation*}
there exists $\theta_1\in[0,1]$ such that
\begin{equation*}
\begin{cases}
b-K(1-\theta_1)(m-1)-K<-n,\\
2b+n-n(m-1)(1-\theta_1)\geq0,
\end{cases}
\end{equation*}
then we choose $\theta=\theta_1$ in (\ref{eq16}). Now by (\ref{eq15}) and (\ref{eq16}),
\begin{equation}\label{eq17}
\begin{split}
|I_{21}|\leq&|I_{211}|+|I_{212}|\\
\leq& C|t|^{-\frac{n+2b}{2(m-1)}}|x|^{-\mu_b}+C|t|^{-\frac{n+2b}{2(m-1)}}|x|^{-\mu_b}(|t|^{-\frac1m}|x|)^{-\frac{Km(2b+n-n(m-1)(1-\theta_1))}{2(m-1)(n+b)}}\\
\leq& C|t|^{-\frac{n+2b}{2(m-1)}}|x|^{-\mu_b}\\
\leq& C|t|^{-\frac{n+b}{m}}(1+|t|^{-\frac1m}|x|)^{-\mu_b}.
\end{split}
\end{equation}
When $0<|t|<t_0$ and $|t|^{-\frac1m}|x|\leq1$, we choose $r_1=|t|^{-\frac1m}$ and $\theta=0$ in (\ref{eq15}) and (\ref{eq16}). Since $b-K(1-\theta)(m-1)-K=b-Km<-n$, we have
\begin{equation}\label{eq18}
\begin{split}
|I_{21}|\leq&|I_{211}|+|I_{212}|\\
\leq& C|t|^{-\frac{n+b}{m}}\\
\leq& C|t|^{-\frac{n+b}{m}}(1+|t|^{-\frac1m}|x|)^{-\mu_b}.
\end{split}
\end{equation}
Now (\ref{eq13}) and (\ref{eq14}) prove (\ref{eq5}) for $I_2$, while (\ref{eq17}) and (\ref{eq18}) prove (\ref{eq6}) for $I_2$.

{\bf Step 2: Estimates for $I_1$.} We first need some preparation. Denote by $c,~c_j$ $(j=3,~4,~\cdots)$ the absolute constants depending only on $n,~m,~c_1,~c_2,~c_1',~c_2'$ and $c_\alpha$ (see (\ref{eq1}), (\ref{eq3}) and (\ref{eq4})). Recall (\ref{eq7}) and denote $c_3=(1/2c_2)^{1/(m-1)}$, $c_4=(3/2c_1)^{1/(m-1)}$, we may assume that $r>r_0/c_4$, which means $|t|^{-1}|x|>2c_1r_0^{m-1}/3$, since otherwise $I_1=0$.
	
Set $\Omega'=\{\xi\in\mathbb{R}^n;~c_3r<|\xi|<2c_4r\}$. While $n>1$ ($\Rightarrow K\geq2$), we observe that if a line segment $[\xi',\xi'']$ lies in $\Omega\cap\Omega'$, then by Taylor's formula
\begin{equation*}
\nabla a(\xi')=\nabla a(\xi'')+Ha(\xi'')(\xi'-\xi'')+R(\xi',\xi''),
\end{equation*}
where
\begin{equation*}
\begin{split}
|R(\xi',\xi'')|=&\left|\int_0^1a^{(3)}(\xi''+s(\xi'-\xi'');\xi'-\xi'',\xi'-\xi'')(1-s)ds\right|\\
\leq&\mbox{$\frac12$}c_5r^{m-3}|\xi'-\xi''|^2.
\end{split}
\end{equation*}
Here $a^{(3)}$ refers to the third order differential of $a$ which is a tri-linear operator (see \cite[p. 11]{H}). If we also have $[\xi,\xi'']\subset\Omega\cap\Omega'$ with $|\xi-\xi''|=|\xi'-\xi''|\leq|\xi-\xi'|$, it follows from (\ref{eq4}) that
\begin{equation*}
\begin{split}
|\nabla a(\xi)-\nabla a(\xi')|\geq&|Ha(\xi'')(\xi-\xi')|-|R(\xi,\xi'')|-|R(\xi',\xi'')|\\
\geq&\min_{1\leq j\leq n}|\lambda_j(\xi'')||\xi-\xi'|-c_5r^{m-3}|\xi-\xi'|^2\\
\geq&2cr^{m-2}|\xi-\xi'|-c_5r^{m-3}|\xi-\xi'|^2,
\end{split}
\end{equation*}
where $\{\lambda_j\}_{j=1}^n$ are the eigenvalues of $Ha$, and the last line comes from the fact that $|\lambda_j(\xi'')|$ is controlled from above by the spectral radius of $Ha(\xi'')$, thus by any consistent matrix norm of $Ha(\xi'')$ which has the bound $C|\xi''|^{n(m-2)}$ since $a\in S_K^m(\Omega)$, and that $\lambda_j=\mathrm{det}Ha/\Pi_{i\neq j}\lambda_i$ for each $j$.
	
Consider the conic and annular decomposition of $\Omega_1$ to apply the above inequality. Choose a finite set $\{\xi_v\}\subset S^{n-1}$ (the unit sphere in $\mathbb{R}^n$) such that $|\xi_v-\xi_{v'}|\geq c_6$ for $v\neq v'$ and $\min_{v}|\xi-\xi_v|<c_6$ for all $\xi\in S^{n-1}$, where $c_6$ will be chosen below. Corresponding to $\{\xi_v\}$, write $\Omega_1=\cup_v\cup_{k=2}^{k_0}(\Omega^{vk}\cap\Omega_1)$ where $\Omega^{vk}=r\widetilde{\Omega}^{vk}:=\{r\xi\in\mathbb{R}^n;~\xi\in\widetilde{\Omega}^{vk}\}$,
\begin{equation*}
\widetilde{\Omega}^{vk}=\{\xi\in \mathbb{R}^n;~|\mbox{$\xi\over|\xi|$}-\xi_v|<2c_6~\mathrm{and}~(c_3+kc_7-2c_7)<|\xi|<(c_3+kc_7)\},
\end{equation*}
and $k_0\in\mathbb{N}$ is so large that $c_7:=(c_4-c_3)/k_0\leq c/4c_5$. For the geometric solid $\widetilde{\Omega}^{vk}$, $c_6$ can be chosen small enough such that for all $v$ and $k$ we have $\mathrm{dist}(\widetilde{\Omega}^{vk})\leq c/c_5$, and that for any $\xi,~\xi'\in\Omega^{vk}$ there exists $\xi''\in\Omega\cap\Omega'$ such that $[\xi,\xi'']\cup[\xi',\xi'']\subset\Omega\cap\Omega'$ with $|\xi-\xi''|=|\xi'-\xi''|\leq|\xi-\xi'|$. As a reminder, the constant $2$ in the definition of $\Omega'$ is to insure the existence of such $\xi''$ in the cases where $r_0\in r(c_3+(k_0-2)c_7,c_3+k_0c_7)$. Therefore
\begin{equation}\label{eq19}
|\nabla a(\xi)-\nabla a(\xi')|\geq cr^{m-2}|\xi-\xi'|\quad\mathrm{if}~\xi,\xi'\in\Omega^{vk}.
\end{equation}
While $n=1$, (\ref{eq19}) still holds from the mean value theorem. Meanwhile we choose a partition of unity $\{\widetilde{\chi}_{vk}\}$ subordinate to the covering $\{\widetilde{\Omega}^{vk}\}$ of $r^{-1}\Omega_1$, so that $\{\chi_{vk}(\xi):=\widetilde{\chi}_{vk}(r^{-1}\xi)\}$ is a partition of unity of $\Omega_1$ subordinate to $\{\Omega^{vk}\}$. Then $|\partial^\alpha\chi_{vk}|\leq C|\xi|^{-|\alpha|}$ for $|\alpha|\leq K$ by (\ref{eq7}). We write $I=\Sigma_{v,k}I_1^{vk}$ where
\begin{equation*}
I_1^{vk}(t,x)=\int_{\Omega^{vk}}e^{i(ta(\xi)+x\cdot\xi)}\varphi_1(\xi)\chi_{vk}(\xi)\psi(\xi)d\xi.
\end{equation*}
Notice that the number of the indices $v$ and $k$ is finite and universal in $(t,x)$.\\
	
Now it suffices to estimate $I_1^{vk}$. We pick up $\xi_0\in\Omega^{vk}\cap\Omega_1$ such that $|\nabla a(\xi_0)+\frac xt|\leq\frac c4r^{m-2}r_2$ where $r_2=|t|^{-1/2(m-1)}|x|^{(2-m)/2(m-1)}$, while the case when such $\xi_0$ does not exist can be easily treated (see \cite[p. 53]{KPG}). Subordinate to the supports $V_1=\{\xi\in\Omega^{vk};~|\xi-\xi_0|<r_2\}$ and $V_2=\{\xi\in\Omega^{vk};~|\xi-\xi_0|>\frac12r_2\}$ we split $I_1^{vk}$ into $I_{11}^{vk}$ and $I_{12}^{vk}$ smoothly (see $\eta$ below). It's obvious that
\begin{equation}\label{eq20}
|I_{11}^{vk}|\leq Cr^br_2^n=C|t|^{-\frac n2+\mu_b}|x|^{-\mu_b}.
\end{equation}
	
Replace $\Omega_2$ by $V_2$ in (\ref{eq8}) and denote these sets by $W_j$, we split $I_{12}^{vk}$ into $n$ new integrals $I_{12j}^{vk}$, still naming the corresponding partition functions $\eta_j$. To estimate $I_{12}^{vk}$, similarly, it suffices to estimate
\begin{equation*}
I_{121}^{vk}(t,x)=\int_{W_1}e^{i(ta(\xi)+x\cdot\xi)}D_*^K\eta(\xi)d\xi,
\end{equation*}
where
\begin{equation*}
\eta(\xi)=\varphi_1(\xi)\chi_{vk}(\xi)(1-\varphi((\xi-\xi_0)/r_2))\eta_1(\xi)\psi(\xi).
\end{equation*}
By (\ref{eq7}) and (\ref{eq11}) one first checks
\begin{equation*}
|\partial_1^jg|\leq C|t|^{-1}|\xi|^{j(m-2)}|\partial_1a(\xi)+\mbox{$\frac{x_1}{t}$}|^{-j-1}.
\end{equation*}
Further, (\ref{eq7}) and (\ref{eq19}) imply,
\begin{equation*}
\begin{split}
|\nabla a(\xi)+\mbox{$\frac xt$}|\geq&|\nabla a(\xi)-\nabla a(\xi_0)|-|\nabla a(\xi_0)+\mbox{$\frac xt$}|\\
\geq& cr^{m-2}|\xi-\xi_0|-\mbox{$\frac c4$}r^{m-2}r_2\\
\geq&\mbox{$\frac c2(\frac{|\xi|}{c_4})^{m-2}$}|\xi-\xi_0|,
\end{split}
\end{equation*}
we thus have
\begin{equation*}
|\partial_1^jg|\leq C|t|^{-1}|\xi|^{2-m}|\xi-\xi_0|^{-j-1}.
\end{equation*}
Moreover, by (\ref{eq7}) and (\ref{eq10}), we have
\begin{equation*}
|\partial_1^j\eta_1|\leq C|\xi|^{j(m-2)}|\nabla a(\xi)+\mbox{$\frac xt$}|^{-j}\leq C|\xi-\xi_0|^{-j}.
\end{equation*}
These estimates, together with $\mathrm{dist}(\Omega^{vk})\leq cr/c_5$ and (\ref{eq7}), imply $|\partial_1^j\eta|\leq C|\xi|^b|\xi-\xi_0|^{-j}$, and consequently,
\begin{equation*}
|D_*^K\eta|\leq C|t|^{-K}r^{b+K(2-m)}|\xi-\xi_0|^{-2K}.
\end{equation*}
Therefore by $K>\frac n2$ we have
\begin{equation}\label{eq21}
\begin{split}
|I_{121}^{vk}|\leq&\int_{|\xi-\xi_0|\geq\frac12r_2}|D_*^K\eta(\xi)|d\xi\\
\leq& C|t|^{-K}r^{b+K(2-m)}r_2^{n-2K}\int_{|\xi|\geq\frac12}|\xi|^{-2K}d\xi\\
\leq& C|t|^{-\frac n2+\mu_b}|x|^{-\mu_b}.
\end{split}
\end{equation}
	
Now (\ref{eq20}) and (\ref{eq21}), together with the fact that $I_1=0$ when $r\leq r_0/c_4$, prove (\ref{eq5}) for $I_1$. To prove (\ref{eq6}) for $I_1$, first notice that when $0<|t|<t_0$ and $|t|^{-\frac1m}|x|>1$, (\ref{eq20}) and (\ref{eq21}) also prove
\begin{equation*}
|I_1|\leq C|t|^{-\frac n2+\mu_b}|x|^{-\mu_b}\leq C|t|^{-\frac{n+b}{m}}(1+|t|^{-\frac1m}|x|)^{-\mu_b}.
\end{equation*}
When $0<|t|<t_0$ and $|t|^{-\frac1m}|x|\leq1$, by (\ref{eq7}) and that $b\geq-\frac n2$, we have
\begin{equation*}
\begin{split}
|I_1|\leq&\int_{c_3r\leq|\xi|\leq c_4r}|\xi|^bd\xi\\
\leq& C|t|^{-\frac{n+b}{m-1}}|x|^{\frac{n+b}{m-1}}\\
\leq& C|t|^{-\frac{n+b}{m}}(1+|t|^{-\frac1m}|x|)^{-\mu_b}.
\end{split}
\end{equation*}
This completes the proof.
\end{proof}
\begin{remark}\label{rk0}
Lemma \ref{lm1} is sharp in the domain
\begin{equation*}
\{(t,x)\in\mathbb{R}^{n+1};~|t|\geq t_0,~|t|^{-1}|x|\geq N\}
\end{equation*}
for some sufficiently large $N>0$. In fact, \cite[Lemma 1]{Mi} considers the case that $a(\xi)=|\xi|^m$, $\psi(\xi)=\phi(\xi)(1+|\xi|^2)^{b/2}$ where $\phi$ is some cutoff function supported away from the origin, and gains in such domain the lower bound for $|I|$, which is the same as the upper bound in (\ref{eq5}). See another proof after Example \ref{ex2}.
\end{remark}
\begin{remark}\label{rk00}
Lemma \ref{lm1} is also sharp in the domain
\begin{equation*}
\{(t,x)\in\mathbb{R}^{n+1};~0<|t|\leq t_0,~|t|^{-\frac1m}|x|\geq \tau_0\}
\end{equation*}
for some sufficiently large $\tau_0>0$. See comments after Example \ref{ex2}.
\end{remark}
\begin{remark}\label{rk1}
For any $\delta\in(0,\frac12)$, if we write $\Omega=\Omega_1\cup\Omega_2$ where
\begin{equation*}
\Omega_1=\{\xi\in\Omega;~\mbox{$|\nabla a(\xi)+\frac xt|<\delta|\frac xt|$}\}
\end{equation*}
and
\begin{equation*}
\Omega_2=\{\xi\in\Omega;~\mbox{$|\nabla a(\xi)+\frac xt|>\frac\delta 2|\frac xt|$}\},
\end{equation*}
with minor modification in the proof of Lemma \ref{lm1}, (\ref{eq5}) can be refined as
\begin{equation*}
\begin{split}
|I(t,x)|\leq\begin{cases}C|t|^{-\frac n2+\mu_b}|x|^{-\mu_b}&\text{if}~|t|\geq t_0,~|t|^{-1}|x|>\frac{c_1r_0^{m-1}}{1+\delta};\\
C|t|^{-K}              &\text{if}~|t|\geq t_0,~|t|^{-1}|x|\leq\frac{c_1r_0^{m-1}}{1+\delta}.
\end{cases}
\end{split}
\end{equation*}
\end{remark}
\begin{remark}\label{rk2}
The conditions of $b$ in Lemma \ref{lm1} mainly arise from estimating $I_2$. In fact, one realizes that to prove (\ref{eq5}) and (\ref{eq6}) for $I_1$, we only need $b\geq-n$. We also point out that relaxing the range of $b$ is possible for smaller space-time domains by same kinds of considerations, (as well as for Lemma \ref{lm2} below), but this will only increase the length of argument which we don't aim at.
\end{remark}
\begin{remark}
When $m=1$, our proof completely fails, and we refer to \cite{Mi2} for relevant topics. When $0<m<1$, it seems that most details become sensitive, (also for Lemma \ref{lm2} below,) and we refer to \cite{Mi3} for a comparison.
\end{remark}

The following is the other setting we take interest of.

\begin{lemma}\label{lm2}
Let $\Omega=\{\xi\in\mathbb{R}^n\setminus\{0\};~|\xi|<r_0\}$ for some $r_0>0$. For any fixed $\tau_0>0$,\\
i) if $b\in(-n,K-n)$, then
\begin{equation}\label{eq22}
|I(t,x)|\leq C(1+|x|)^{-(n+b)}\quad\text{if}~|t|^{-\frac1m}|x|>\tau_0,~|t|^{-1}|x|>2c_2r_0^{m-1};
\end{equation}
ii) if $b\in[-\frac n2,Km-n)$, then
\begin{equation}\label{eq23}
|I(t,x)|\leq C|t|^{-\frac n2+\mu_b}|x|^{-\mu_b}\quad\text{if}~|t|^{-\frac1m}|x|>\tau_0,~|t|^{-1}|x|\leq2c_2r_0^{m-1};
\end{equation}
iii) if $b\in(-n,Km-n)$, then
\begin{equation}\label{eq24}
|I(t,x)|\leq C(1+|t|^\frac1m)^{-(n+b)}            \quad\text{if}~|t|^{-\frac1m}|x|\leq\tau_0.
\end{equation}
\end{lemma}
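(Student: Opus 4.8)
The plan is to mirror the structure of the proof of Lemma \ref{lm1}, but now with the roles of "small" and "large" frequencies interchanged: here $\Omega$ is the punctured ball $\{0<|\xi|<r_0\}$, so the ellipticity conditions (\ref{eq3}) and (\ref{eq4}) are used near the origin, where $|\xi|^{m-1}$ is small. As before, I would first regularize by $\psi_\epsilon(\xi)=e^{-\epsilon|\xi|^2}\psi(\xi)$ to reduce to proving the estimates for a continuous $I_\epsilon$ with constants uniform in $\epsilon$, then drop the subscript. Next I would split $\Omega=\Omega_1\cup\Omega_2$ via the same partition of unity $\varphi_1,\varphi_2$ adapted to whether $|\nabla a(\xi)+x/t|<\frac12|x/t|$ (the "stationary" region) or $|\nabla a(\xi)+x/t|>\frac14|x/t|$ (the "non-stationary" region), writing $I=I_1+I_2$. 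The condition $|t|^{-1}|x|>2c_2r_0^{m-1}$ in parts i) and ii) forces, by (\ref{eq3}), $|\nabla a(\xi)|\le c_2r_0^{m-1}<\frac12|x/t|$ for all $\xi\in\Omega$, hence $\Omega_1=\emptyset$ and only the non-stationary piece $I_2$ survives; this is the key simplification in the regime $|t|^{-1}|x|$ large. When $|t|^{-1}|x|\le 2c_2r_0^{m-1}$ (part ii), both pieces appear and I would treat $I_1$ by the conic-and-annular decomposition plus the Hessian lower bound (\ref{eq19}) exactly as in Step 2 of Lemma \ref{lm1}, now noting the annuli accumulate at the origin; the leading contribution $|I_{11}^{vk}|\le Cr^b r_2^n$ with $r=|x/t|^{1/(m-1)}$, $r_2=|t|^{-1/(2(m-1))}|x|^{(2-m)/(2(m-1))}$ again gives exactly $C|t|^{-n/2+\mu_b}|x|^{-\mu_b}$.

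For the non-stationary integral $I_2$, I would again localize via the $U_j$ decomposition (\ref{eq8}) and the operator $D_*f=\partial_1(gf)$ with $g=(it\partial_1a+ix_1)^{-1}$, integrating by parts $K$ times. The new point is bookkeeping near $\xi=0$: on $\Omega_2$ one has $|\partial_1^j g|\le C|t|^{-1}|\xi|^{1-m-j}$ and $|\partial_1^j g|\le C|x|^{-1}|\xi|^{-j}$, hence the interpolated bound $|\partial_1^j g|\le C|t|^{-(1-\theta)}|x|^{-\theta}|\xi|^{-(1-\theta)(m-1)-j}$, and $|D_*^K(\varphi_2\eta_1\psi)|\le C|t|^{-K(1-\theta)}|x|^{-K\theta}|\xi|^{b-K(1-\theta)(m-1)-K}$. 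But now the integral is over $|\xi|<r_0$, so to make $\int_{|\xi|<r_0}|\xi|^{b-K(1-\theta)(m-1)-K}d\xi$ converge I need the exponent $>-n$, i.e. $b>K(1-\theta)(m-1)+K-n\ge -n$ at $\theta=1$, while the decay I can afford is governed by $\theta$ as in Lemma \ref{lm1}. To handle the contribution of the region near the origin where integration by parts is wasteful, I would split off a ball $\{|\xi|<r_1\}$ as in the $I_{211},I_{212}$ argument of Lemma \ref{lm1}, with $r_1$ chosen depending on the target domain: this gives a crude bound $Cr_1^{n+b}$ from $I_{211}$ and an integration-by-parts bound from $I_{212}$, and optimizing $r_1$ produces the various right-hand sides $(1+|x|)^{-(n+b)}$, $|t|^{-n/2+\mu_b}|x|^{-\mu_b}$, and $(1+|t|^{1/m})^{-(n+b)}$. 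Specifically, for (\ref{eq22}) one balances against $|x|$; for (\ref{eq24}), where $|t|^{-1/m}|x|\le\tau_0$ so $|x|\lesssim|t|^{1/m}$, one uses $r_1\sim|t|^{-1/m}$ (and when $|t|$ is small, just the trivial bound $\int_{|\xi|<r_0}|\xi|^b\,d\xi\le Cr_0^{n+b}\le C$ since $b>-n$); for (\ref{eq23}) one reuses the $I_2$-estimate from the proof of (\ref{eq6}) verbatim.

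The main obstacle I expect is the choice of the intermediate radius $r_1$ and the verification that the interval hypotheses on $b$ — $(-n,K-n)$ for i), $[-n/2,Km-n)$ for ii), $(-n,Km-n)$ for iii) — are exactly what is needed so that (a) the tail integral $\int_{|\xi|<r_0}|\xi|^{\cdots}d\xi$ converges, and (b) there exists a choice of $\theta\in[0,1]$ making the integration-by-parts exponent large enough while the resulting powers of $|t|$ and $|x|$ combine into the claimed form. As in Lemma \ref{lm1}, this amounts to writing the relevant $b$-interval as a union over $\theta\in[0,1]$ of shifted intervals and extracting the appropriate $\theta_0$; the radial part contributes the $\mu_b$ exponent and the convergence near $0$ contributes the $n+b$ exponent. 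The geometric work in Step 2 (existence of the auxiliary point $\xi''$, smallness of $c_6,c_7$) carries over with only the cosmetic change that the annuli $\widetilde\Omega^{vk}$ now shrink toward the origin rather than toward a finite radius, so I do not anticipate genuine difficulty there beyond adapting the constants.
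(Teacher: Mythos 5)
Your proposal is correct and follows essentially the same route as the paper: the same $\Omega_1/\Omega_2$ split (with $\Omega_1=\emptyset$ once $|t|^{-1}|x|>2c_2r_0^{m-1}$), a finite covering of the scaled annulus for $I_1$ (the paper in fact uses a simpler covering here than in Lemma~\ref{lm1}, dropping the radial index $k$, since $r<r_0/c_3$ keeps $\Omega_1$ away from $\{|\xi|=r_0\}$, so the ``annuli accumulating at the origin'' picture is not quite what occurs), and the $I_{211}+I_{212}$ split of $I_2$ with an intermediate radius $r_1$ balancing the crude $r_1^{n+b}$ bound against the $\theta$-dependent integration-by-parts tail. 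One small caveat: your ``verbatim'' reuse of the $I_2$-argument for (\ref{eq6}) to get part~ii) glosses over the sub-case $b\in[K-n,Km-n)$, where the paper instead takes the dedicated $\theta=\frac{Km-n-b-\epsilon}{K(m-1)}$ and exploits the lower bounds on $|t|$ and $|x|$ forced by $|t|^{-\frac1m}|x|>\tau_0$, $|t|^{-1}|x|\le2c_2r_0^{m-1}$ (and for $b\in(-\frac n2,K-n)$ simply derives (\ref{eq23}) from the $\theta=1$ bound $(1+|x|)^{-(n+b)}$), so some care with $r_1<2r_0$ and the $b$-ranges is needed even though the balancing scheme does land on the same $|t|^{-\frac n2+\mu_b}|x|^{-\mu_b}$.
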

\begin{proof}
The proof is somehow parallel to that of Lemma \ref{lm1}, and we start by splitting $I=I_1+I_2$ in exactly the same way. Notice that $I(t,\cdot)\in C^\infty(\mathbb{R}^n)$ in all cases.

{\bf Step 1: Estimates for $I_1$.} As in the proof of Lemma \ref{lm1}, we recall that (\ref{eq7}) holds and use the same notations of $c_3$ and $c_4$. We may assume that $r<r_0/c_3$, which means $|t|^{-1}|x|<2c_2r_0^{m-1}$, since otherwise $I_1=0$.
	
If $\{\widetilde{\Omega}^v\}$ is a finite open covering of $\{\xi\in\mathbb{R}^n;~c_3\leq|\xi|\leq c_4\}$, such that for every $v$ and any $\xi,~\xi'\in\widetilde{\Omega}^v$ we have $0\notin[\xi,\xi']$ and $\mathrm{dist}(\widetilde{\Omega}^v)\leq\epsilon$ where $\epsilon>0$ will be chosen below, we can write $\Omega_1=\cup_v(\Omega^v\cap\Omega_1)$ where $\Omega^v=r\widetilde{\Omega}^v$. Now for every $v$ and any $\xi,~\xi'\in\Omega^v\cap\Omega_1$, we have $0\notin[\xi,\xi']\subset\Omega$, $|\xi-\xi'|\leq\epsilon r$ and
\begin{equation*}
\begin{split}
|\nabla a(\xi)-\nabla a(\xi')|\geq&|Ha(\xi')(\xi-\xi')|-|R(\xi,\xi')|\\
\geq&2cr^{m-2}|\xi-\xi'|-c_5r^{m-3}|\xi-\xi'|^2\\
\geq& cr^{m-2}|\xi-\xi'|,
\end{split}
\end{equation*}
if $\epsilon$ is chosen small. By this fact and the same argument in the proof of Lemma \ref{lm1}, (\ref{eq21}) holds when $|t|^{-1}|x|<2c_2r_0^{m-1}$, which proves (\ref{eq23}) for $I_1$. When $|t|^{-1}|x|<2c_2r_0^{m-1}$ and $|t|^{-\frac1m}|x|\leq\tau_0$, we have
\begin{equation*}
\begin{split}
|I_1|\leq&\int_{c_3r\leq|\xi|\leq c_4r}|\xi|^bd\xi\\
\leq& C|t|^{-\frac{n+b}{m-1}}|x|^{\frac{n+b}{m-1}}\\
\leq& C(1+|t|^\frac1m)^{-(n+b)},
\end{split}
\end{equation*}
because $b>-n$. This, and the fact that $I_1=0$ when $|t|^{-1}|x|\geq2c_2r_0^{m-1}$, prove (\ref{eq24}) for $I_1$, and then (\ref{eq22}) holds for $I_1$ trivially. We again point out that $b>-n$ is the only condition on $b$ we need in the arguments for $I_1$.

{\bf Step 2: Estimates for $I_2$.} Similar to those in the proof of Lemma \ref{lm1}, we choose $0<r_1=r_1(|t|,|x|)$ and accordingly split $I_{21}=I_{211}+I_{212}$ when $r_1<2r_0$. Then $I_{211}$ satisfies (\ref{eq15}) since $b>-n$, while $I_{212}$ satisfies (\ref{eq16}) for some $\theta\in[0,1]$ such that $b<K(1-\theta)(m-1)+K-n$. By always choosing $r_1=(|t|^{\theta-1}|x|^{-\theta})^{1/((1-\theta)(m-1)+1)}$, we have
\begin{equation*}
|I_{21}|\leq|I_{211}|+|I_{212}|\leq Cr_1^{n+b}\quad\mathrm{if}~0<r_1<2r_0.
	\end{equation*}
	However
	\begin{equation*}
		|I_{21}|\leq\int_{|\xi|<r_0}|\xi|^bd\xi\leq C,
	\end{equation*}
	therefore
	\begin{equation}\label{eq25}
		|I_{21}|\leq C(1+r_1^{-1})^{-(n+b)}.
	\end{equation}
	
	Choosing $\theta=0$, and consequently having $b<Km-n$, prove (\ref{eq24}) for $I_2$. Choosing $\theta=1$, and consequently having $b<K-n$, prove (\ref{eq22}) for $I_2$; meanwhile (\ref{eq23}) for $I_2$ when $b\in(-\frac n2,K-n)$ is also proved, since $|t|^{-\frac1m}|x|>\tau_0$ and $|t|^{-1}|x|\leq2c_2r_0^{m-1}$ imply $|x|>2c_2(\frac{\tau_0}{2c_2})^\frac{m}{m-1}r_0^{-1}$, and we have
	\begin{equation*}
		\begin{split}
			(1+|x|)^{-(n+b)}\leq&C|x|^{-(n+b)}\\
			=&C|t|^{-\frac n2+\mu_b}|x|^{-\mu_b}(|t|^{-\frac 1m}|x|)^{-\frac{m(n+2b)}{2(m-1)}}\\
			\leq&C|t|^{-\frac n2+\mu_b}|x|^{-\mu_b}.
		\end{split}
	\end{equation*}
	
	Finally, to prove (\ref{eq23}) for $I_2$ when $b\in[K-n,Km-n)$, notice that $\frac{Km-n-b}{K(m-1)}\in(0,1]$, we choose $\theta=\frac{Km-n-b-\epsilon}{K(m-1)}>0$ for sufficiently small $\epsilon>0$, thus $b<K(1-\theta)(m-1)+K-n$ holds. When $|t|^{-\frac1m}|x|>\tau_0$ and $|t|^{-1}|x|\leq2c_2r_0^{m-1}$, there exists $C_\theta>0$ such that $r_1<C_\theta$, then (\ref{eq25}) reads
\begin{equation*}
\begin{split}
|I_{21}|\leq&Cr_1^{n+b}\\
=&C(|t|^{\theta-1}|x|^{-\theta})^\frac{n+b}{(1-\theta)(m-1)+1}\\
=&C|t|^{-\frac n2+\mu_b}|x|^{-\mu_b}(|t|^{-\frac1m}|x|)^{-\frac{(2K-n)m}{2(m-1)}+\frac{Km\epsilon}{(m-1)(n+b+\epsilon)}}|t|^{-\frac{(n+b-K+\epsilon)\epsilon}{n+b-K+\epsilon}}.
		\end{split}
\end{equation*}
	Also notice that $|t|^{-\frac1m}|x|>\tau_0$ and $|t|^{-1}|x|\leq2c_2r_0^{m-1}$ imply $|t|>(\frac{\tau_0}{2c_2})^\frac{m}{m-1}r_0^{-m}$, so by choosing $\epsilon$ small and recall $K>\frac n2$, we have
	\begin{equation*}
		|I_{21}|\leq C|t|^{-\frac n2+\mu_b}|x|^{-\mu_b},
	\end{equation*}
	which completes the proof.
\end{proof}
\begin{remark}\label{rk3}
Lemma \ref{lm2} is sharp in the domain
\begin{equation*}
\{(t,x)\in\mathbb{R}^{n+1};|t|^{-1}|x|< N,~|t|^{-\frac1m}|x|\geq\tau_0\}
\end{equation*}
for sufficiently small $N$ and sufficiently large $\tau_0$. See comments after Example \ref{ex2}.
\end{remark}
\begin{remark}\label{rk4}
Similar to Remark \ref{rk1}, the space-time domains in (\ref{eq22}) and (\ref{eq23}) can be refined as $|t|^{-\frac 1m}|x|>\tau_0$, $|t|^{-1}|x|>\frac{c_2r_0^{m-1}}{1-\delta}$ and $|t|^{-\frac 1m}|x|>\tau_0$, $|t|^{-1}|x|\leq\frac{c_2r_0^{m-1}}{1-\delta}$ respectively, for any $\delta\in(0,\frac12)$.
\end{remark}

\section{Main results and applications}\label{sec3}

\subsection{Main results}\label{sec3.1}
Combining Lemma \ref{lm1} and Lemma \ref{lm2} leads to the following theorem concerning oscillatory integrals supported on $\mathbb{R}^n$.

\begin{theorem}\label{th1}
Let $1<m_1\leq m_2$, $-\frac n2\leq b_1\leq b_2$, and $\nu_j=\frac{n(m_j-2)-2b_j}{2(m_j-1)}$ for $j=1,~2$. Suppose $a,~\psi\in C^\infty(\mathbb{R}^n\setminus\{0\})$, $a$ is real valued, and there exist $0<r_0<R_0$ such that $a\in S^{m_j}(B_j)$ and $\psi\in S^{b_j}(B_j)$ for $j=1,~2$, where $B_1=\{\xi\in\mathbb{R}^n\setminus\{0\};~|\xi|<R_0\}$ and $B_2=\{\xi\in\mathbb{R}^n;~|\xi|>r_0\}$. Further, assume that (\ref{eq4}) holds in $B_j$ for $m=m_j$ ($j=1,~2$), and there exist $d_1,~d_1',~d_2,~d_2'>0$ satisfying $d_1r_0^{m_2-1}\leq d_2R_0^{m_1-1}$, such that
\begin{align}\label{equ3.1}
d_2'|\xi|^{m_1-1}\leq|\nabla a(\xi)|\leq d_2|\xi|^{m_1-1},\quad\xi\in B_1,
\end{align}
and
\begin{align}\label{equ3.2}
d_1|\xi|^{m_2-1}\leq|\nabla a(\xi)|\leq d_1'|\xi|^{m_2-1},\quad\xi\in B_2.
\end{align}
Consider (\ref{eq2}) where $\Omega=\mathbb{R}^n\setminus\{0\}$, then for any fixed $t_0,~N>0$, we have
\begin{equation}\label{eq26}
\begin{split}
|I(t,x)|\leq\begin{cases}C|t|^{-\frac{n+b_2}{m_2}}(1+|t|^{-\frac{1}{m_2}}|x|)^{-\nu_2}&\text{if}~0<|t|< t_0,~x\in\mathbb{R}^n~\text{or}~|t|\geq t_0,~|t|^{-1}|x|>N;\\
C|t|^{-\frac{n+b_1}{m_1}}(1+|t|^{-\frac{1}{m_1}}|x|)^{-\nu_1}&\text{if}~|t|\geq t_0,~|t|^{-1}|x|\leq N.
\end{cases}
\end{split}
\end{equation}
\end{theorem}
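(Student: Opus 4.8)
The plan is to reduce Theorem \ref{th1} to Lemma \ref{lm1} and Lemma \ref{lm2} by a single smooth frequency splitting, and then to patch together the space-time regions the two lemmas produce. First I would fix $\rho\in(r_0,R_0)$ and a cutoff $\chi_0\in C^\infty(\mathbb{R}^n)$ with $\chi_0\equiv1$ on $\{|\xi|\le\rho\}$, $\supp\chi_0\subset\{|\xi|\le R_0\}$ and $|\partial^\alpha\chi_0(\xi)|\le C|\xi|^{-|\alpha|}$, and set $\chi_\infty=1-\chi_0$. Then $\psi\chi_0\in S^{b_1}(\mathbb{R}^n\setminus\{0\})$ with support in $\overline{B_1}$, and $\psi\chi_\infty\in S^{b_2}(\mathbb{R}^n\setminus\{0\})$ with support in $\{|\xi|\ge\rho\}\subset B_2$. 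Writing $I=I_0+I_\infty$ accordingly, the hypotheses of Lemma \ref{lm2} hold for $I_0$ with its ``$r_0$'' taken to be $R_0$, $m=m_1$, $b=b_1$, and (\ref{eq3})--(\ref{eq4}) supplied by (\ref{equ3.1}) and the non-degeneracy assumed on $B_1$; likewise the hypotheses of Lemma \ref{lm1} hold for $I_\infty$ on $\Omega=\{|\xi|>r_0\}$ with $m=m_2$, $b=b_2$, and (\ref{eq3})--(\ref{eq4}) supplied by (\ref{equ3.2}). Since $a,\psi\in C^\infty(\mathbb{R}^n\setminus\{0\})$, the integer $K$ in both lemmas may be taken arbitrarily large, so the required ranges of $b$ hold once $K$ is large (here $b_1,b_2\ge-\tfrac n2$ is used), while we keep $K>\tfrac n2$.

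Now apply Lemma \ref{lm2} to $I_0$ and Lemma \ref{lm1} to $I_\infty$; this bounds each piece in the regions cut out by the thresholds $|t|=t_0$ and $|t|^{-1}|x|=\tfrac23 d_1 r_0^{m_2-1}$ (for $I_\infty$), and by $|t|^{-1/m_1}|x|=\tau_0$ and $|t|^{-1}|x|=2d_2 R_0^{m_1-1}$ (for $I_0$), with $\tau_0>0$ at our disposal. Here $\mu_b$ with $(m,b)=(m_j,b_j)$ is exactly $\nu_j$, and the reconciliation rests on the two elementary identities
\begin{equation*}
|t|^{-\frac n2+\mu_b}|x|^{-\mu_b}=|t|^{-\frac{n+b}{m}}\bigl(|t|^{-\frac1m}|x|\bigr)^{-\mu_b},\qquad \frac{n+b}{m}+\frac{(m-1)\mu_b}{m}=\frac n2,
\end{equation*}
together with $(1+s)^{-\nu}\asymp s^{-\nu}$ once $s$ is bounded below by a positive constant, and on the structural observation that $d_1 r_0^{m_2-1}\le d_2 R_0^{m_1-1}$ forces $\tfrac23 d_1 r_0^{m_2-1}<2d_2 R_0^{m_1-1}$: thus the region where $I_\infty$ already has no stationary point, and decays like $|t|^{-K}$ (faster than any fixed power, since $K>\tfrac n2$), lies inside the range where Lemma \ref{lm2}(ii)--(iii) governs $I_0$.

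Consequently, on $\{|t|\ge t_0,\ |t|^{-1}|x|\le N\}$ both pieces are controlled by the ``$m_1$-profile'' $|t|^{-(n+b_1)/m_1}(1+|t|^{-1/m_1}|x|)^{-\nu_1}$, while on $\{|t|\ge t_0,\ |t|^{-1}|x|>N\}$ and on $\{0<|t|<t_0\}$ they are controlled by the ``$m_2$-profile''. On $\{0<|t|<t_0\}$ the $m_1$-type output of $I_0$ is absorbed into the $m_2$-profile by noting that in each of the three cases of Lemma \ref{lm2} the constraints then force $|x|$ or $|t|$ into a bounded set, so the comparison collapses to one between positive constants times a power of $|t|$, closed by the identities above and $n+b_j>0$, $n+2b_j\ge0$. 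Finally, the two profiles in (\ref{eq26}) are comparable whenever $|t|\ge t_0$ and $|t|^{-1}|x|$ lies in a fixed compact subinterval of $(0,\infty)$ — both are then $\asymp|t|^{-n/2}$ times a bounded power of $|t|^{-1}|x|$, by the second identity — so the particular value of $N$ is immaterial, and we may for instance establish the estimates for one convenient value of $N$ and transfer.

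The main obstacle I anticipate is organizational rather than conceptual: one has to run through the handful of sub-regions obtained by intersecting the case divisions of the two lemmas and check, in each, that the applicable bound is dominated by the correct one of the two profiles. The genuinely delicate spots are (a) the transition on $\{0<|t|<t_0\}$ just sketched, where one must exploit that the hypotheses of Lemma \ref{lm2} confine either $|x|$ or $|t|$, and (b) the exponent arithmetic, for which the two displayed identities — amounting to the assertion that the $|t|^{-n/2+\mu_b}|x|^{-\mu_b}$ decay, the $|t|^{-(n+b)/m}$ value at $x=0$, and the $(1+|t|^{-1/m}|x|)^{-\mu_b}$ spatial profile are mutually consistent — are exactly what makes the pieces fit together.
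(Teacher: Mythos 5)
Your proposal is correct and takes essentially the same approach as the paper: split $\psi$ smoothly into a piece supported in $\overline{B_1}$ (handled by Lemma \ref{lm2} with $m=m_1$, $b=b_1$) and a piece supported in $\overline{B_2}$ (handled by Lemma \ref{lm1} with $m=m_2$, $b=b_2$), and then use the exponent identity $\frac{n+b}{m}+\frac{(m-1)\mu_b}{m}=\frac n2$ together with the boundedness of $|t|^{-1}|x|$ (or its reciprocal) to reconcile the two profiles across the region boundaries — which is exactly what the paper does, merely spelling it out a little less.

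One small inaccuracy worth flagging: when $0<|t|<t_0$ and one lands in case (i) of Lemma \ref{lm2} for the low-frequency piece, the constraints do not actually confine $|x|$ to a bounded set ($|x|$ can be arbitrarily large). The absorption into the $m_2$-profile there instead relies on the inequality $\nu_2\le\frac n2\le n+b_1$ (which you do invoke via $n+b_j>0$, $n+2b_j\ge0$); so your conclusion is right, but the intermediate claim ``forces $|x|$ or $|t|$ into a bounded set'' does not cover this subcase and should be replaced by a direct comparison of the decay rates at $|x|\to\infty$.
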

\begin{proof}
Choose $\chi\in C_c^\infty(\mathbb{R}^n)$ such that $\mathrm{supp}~\chi\subset\overline{B_1}$ and $\mathrm{supp}~(1-\chi)\subset\overline{B_2}$, then $\chi\psi\in S^{b_1}(B_1)$ and $(1-\chi)\psi\in S^{b_2}(B_2)$. Splitting $I$ in such way, and using $d_1r_0^{m_2-1}\leq d_2R_0^{m_1-1}$, it's easy to apply Lemma \ref{lm1} and Lemma \ref{lm2} to get
\begin{equation*}
\begin{split}
|I(t,x)|\leq\begin{cases}C|t|^{-\frac{n+b_2}{m_2}}(1+|t|^{-\frac{1}{m_2}}|x|)^{-\nu_2}&\mathrm{if}~0<|t|<t_0,~x\in\mathbb{R}^n;\\
C|t|^{-\frac n2+\nu_2}|x|^{-\nu_2}&\mathrm{if}~|t|\geq t_0,~|t|^{-1}|x|>2d_2R_0^{m_1-1};\\
C|t|^{-\frac{n+b_1}{m_1}}(1+|t|^{-\frac{1}{m_1}}|x|)^{-\nu_1}&\mathrm{if}~|t|\geq t_0,~|t|^{-1}|x|\leq 2d_2R_0^{m_1-1}.
\end{cases}
\end{split}
\end{equation*}
But observe that
\begin{equation*}
|t|^{-\frac n2+\nu_2}|x|^{-\nu_2}\approx|t|^{-\frac n2+\nu_1}|x|^{-\nu_1}\approx|t|^{-\frac{n+b_1}{m_1}}(1+|t|^{-\frac{1}{m_1}}|x|)^{-\nu_1}
\end{equation*}
when $|t|\geq t_0$ and $|t|^{-1}|x|$ is bounded from below and above; and also observe that
\begin{equation*}
|t|^{-\frac n2+\nu_2}|x|^{-\nu_2}\approx|t|^{-\frac{n+b_2}{m_2}}(1+|t|^{-\frac{1}{m_2}}|x|)^{-\nu_2}
\end{equation*}
when $|t|\geq t_0$ and $|t|^{-1}|x|$ is bounded from below. Thus the conclusion holds.
\end{proof}

The above theorem gives the point-wise estimates of a large class of dispersive kernels, as well as of their derivatives. We are now going to show its generality and sharpness through some examples.

\begin{example}\label{ex1}
If $a(\xi)=\Sigma_{j=1}^JA_j|\xi|^{m_j}$ where $A_1,~A_J>0$, $A_j\geq0$, and $1<m_1\leq\cdots\leq m_J$, then $a\in S^{m_1}(B_1)$, $a\in S^{m_J}(B_2)$ with (\ref{eq4}) holds respectively. Calculus shows that
\begin{equation*}
\begin{split}
\begin{cases}
A_1m_1|\xi|^{m_1-1}\leq|\nabla a(\xi)|\leq(A_1m_1+\Sigma_{j>1}A_jm_jR_0^{m_j-m_1})|\xi|^{m_1-1}\quad\text{if}~\xi\in B_1,\\
A_Jm_J|\xi|^{m_J-1}\leq|\nabla a(\xi)|\leq(\Sigma_{j<J}A_jm_jr_0^{m_j-m_J}+A_Jm_J)|\xi|^{m_J-1}\quad\text{if}~\xi\in B_2.
\end{cases}
\end{split}
\end{equation*}
Thus by Theorem \ref{th1} when choosing $\psi(\xi)=\xi^\alpha$ we have
\begin{equation*}
\begin{split}
&\left|\partial_x^\alpha\int_{\mathbb{R}^n}e^{i(ta(\xi)+x\cdot\xi)}d\xi\right|\\
\leq&\begin{cases}C|t|^{-\frac{n+|\alpha|}{m_J}}(1+|t|^{-\frac{1}{m_J}}|x|)^{-\frac{n(m_J-2)-2|\alpha|}{2(m_J-1)}}&\text{if}~0<|t|<t_0,~x\in\mathbb{R}^n~\text{or}~|t|\geq t_0,~|t|^{-1}|x|>N;\\
C|t|^{-\frac{n+|\alpha|}{m_1}}(1+|t|^{-\frac{1}{m_1}}|x|)^{-\frac{n(m_1-2)-2|\alpha|}{2(m_1-1)}}&\text{if}~|t|\geq t_0,~|t|^{-1}|x|\leq N,
\end{cases}
\end{split}
\end{equation*}
for $\alpha\in\mathbb{N}_0^n$ and any fixed $t_0,~N>0$.
\end{example}

As mentioned in the Introduction, Example \ref{ex1} recovers Theorem \ref{th0} by choosing $a(\xi)=|\xi|^4+|\xi|^2$, $t_0=1$ and $N=1$. In \cite{KPG}, the case $a(\xi)=\Sigma_{j=1}^n\pm|\xi_j|^{m_j}$ was also considered, and it's clear we can also generalize that by Theorem \ref{th1}, since the relevant integral would then be a product of $n$ one dimensional ones, but we omit the statements here. The next example considers general homogeneous phases.

\begin{example}\label{ex2}
If real valued $a\in C^\infty(\mathbb{R}^n\setminus\{0\})$ is homogeneous of degree $m>1$, satisfying (\ref{eq3}) and (\ref{eq4}) globally, then a direct use of Theorem \ref{th1} gives
\begin{equation}\label{eqex}
\left|\partial_x^\alpha\int_{\mathbb{R}^n}e^{i(ta(\xi)+x\cdot\xi)}d\xi\right|\leq C|t|^{-\frac{n+|\alpha|}{m}}(1+|t|^{-\frac1m}|x|)^{-\mu_{|\alpha|}}\quad\text{if}~t\neq0,x\in\mathbb{R}^n,
\end{equation}
for $\alpha\in\mathbb{N}_0^n$, and
\begin{equation*}
\begin{split}
&\left|(1-\Delta)^\frac b2\int_{\mathbb{R}^n}e^{i(ta(\xi)+x\cdot\xi)}d\xi\right|\\
\leq&\begin{cases}C|t|^{-\frac{n+b}{m}}(1+|t|^{-\frac{1}{m}}|x|)^{-\mu_b}&\text{if}~0<|t|<t_0,~x\in\mathbb{R}^n~\text{or}~|t|\geq t_0,~|t|^{-1}|x|>N;\\
C|t|^{-\frac nm}(1+|t|^{-\frac{1}{m}}|x|)^{-\frac{n(m-2)}{2(m-1)}}&\text{if}~|t|\geq t_0,~|t|^{-1}|x|\leq N,
\end{cases}
\end{split}
\end{equation*}
for $b\in[-\frac n2,+\infty)$ and any fixed $t_0$, $N>0$.
\end{example}
It's also interesting to derive (\ref{eqex}) in the following way. Let $\psi(\xi)=\xi^\alpha$ in (\ref{eq2}), by scaling we have
\begin{equation*}
I(t,x)=|t|^{-\frac{n+|\alpha|}{m}}I(\mathrm{sgn}(t),|t|^{-\frac1m}x)=|t|^{-\frac{n+|\alpha|}{m}}(I'(t,x)+I''(t,x)),
\end{equation*}
where
\begin{equation*}
I'(t,x)=\int_{\mathbb{R}^n}e^{i(\mathrm{sgn}(t)a(\xi)+|t|^{-\frac1m}x\cdot\xi)}\xi^\alpha\phi(\xi)d\xi,
\end{equation*}
and $\phi\in C_c^\infty(\mathbb{R}^n)$ equals $1$ near the origin. Applying Paley-Wiener theorem and Lemma \ref{lm1} to $I'$ and $I''$ respectively of course yields (\ref{eqex}); furthermore, if $a(\xi)=|\xi|^m$, due to \cite[Proposition 5.1(ii)]{Mi3}, we also have
\begin{equation*}
|I''(t,x)|\approx(1+|t|^{-\frac1m}|x|)^{-\mu_{|\alpha|}}\quad\mathrm{if}~|t|^{-\frac1m}|x|~\mathrm{is~sufficiently~large},
\end{equation*}
then by the smallness of $I'$ due to Paley-Wiener theorem, we obtain
\begin{equation}\label{eqex'}
|I(t,x)|\approx|t|^{-\frac{n+|\alpha|}{m}}(1+|t|^{-\frac1m}|x|)^{-\mu_{|\alpha|}}\quad\mathrm{if}~|t|^{-\frac1m}|x|~\mathrm{is~sufficiently~large}.
\end{equation}
Notice that through the proof of Theorem \ref{th1} for obtaining (\ref{eqex}), the estimates of $I$ are determined by Lemma \ref{lm1} in domains that Remark \ref{rk0} and Remark \ref{rk00} concern; and by Lemma \ref{lm2} in the domain that Remark \ref{rk3} concerns. These facts and (\ref{eqex'}) explain the meanings of the above remarks through this specific example. It seems that under the spirit of \cite[Proposition 2]{Mi2}, the above argument can also be adapted to general homogeneous $a$ of degree $>1$, as long as the Gaussian curvature of the level surface $\Sigma=\{\xi\in\mathbb{R}^n;~a(\xi)=1\}$ never vanishes.

\subsection{Global smoothing effects for dispersive equations.}
In this part, we shall derive $L^p-L^q$ as well as Strichartz type estimates for the related dispersive equations via Theorem \ref{th1}. Consider the Cauchy problem
\begin{align}\label{eq3.4}
\begin{cases}
\partial_tu=ia(D)u,\\
u(0,\cdot)=u_0\in L^p(\mathbb{R}^n).
\end{cases}
\end{align}
The solution operator $W(t)$ of \eqref{eq3.4} is formally given by
\begin{align}\label{eq3.5}
	u(t,\cdot)=W(t)u_0={\mathcal{F}}^{-1}(e^{ita(\cdot)})*u_0,
\end{align}
where ${\mathcal F}^{-1}$ denotes the inverse Fourier transform which restricting to $C_c^\infty(\mathbb{R}^n)$ reads $f\mapsto(2\pi)^{-n}\int e^{i\xi\cdot x}f(\xi)d\xi$. More generally, we define
\begin{align}\label{eq3.6}
W_b(t)u_0={\mathcal F}^{-1}(e^{ita(\cdot)}\langle\cdot\rangle^b)*u_0=(2\pi)^{-n}I(t,\cdot)*u_0,\quad t\in{\mathbb R},
\end{align}
where $I$ is of the same form as in \eqref{eq05} with $\psi(\xi)=\langle\xi\rangle^b:=(1+|\xi|^2)^{\frac{b}{2}}$ for some real number $b$.

To consider estimates for $W_b(t)$, from now on, we assume that $a(\xi)$ satisfies the conditions in Theorem \ref{th1} with $2\leq m_1\leq m_2$. Assume $b\in [0,\frac{n(m_1-2)}{2}]$, then $\psi(\xi)=\langle\xi\rangle^b$ satisfies the conditions in Theorem \ref{th1} with $b_1=0$ and $b_2=b$. We also denote $\upsilon_j=\frac{n(m_j-2)-2b}{2(m_j-1)}$ for $j=1,~2$, assume $\frac1p+\frac{1}{p'}=1$ in general, and set the quadrangle
\begin{align}\label{eq3.7}
\square_{ABCD}=\{\mbox{$(\frac1p,\frac1q)$}\in\mathbb{R}_+^2;\ \mbox{$({1\over p},{1\over q})$}\ {\rm lies\
		in\ the\ closed\ quadrangle\ ABCD}\}.
\end{align}
Here $A=(\frac{1}{p_0},\frac{1}{p'_0})$, $B=(1,\frac{1}{p'_1})$, $C=(1,0)$ and $D=(\frac{1}{p_1},0)$, where
\begin{equation*}
\begin{split}
p_0=\begin{cases}
2&\mathrm{if}~m_2=2,\\
\frac{2n(m_2-2)}{n(m_2-2)+2b}&\mathrm{if}~m_2>2,
\end{cases}
\end{split}
\quad\text{and}\quad p_1={n\over n-\upsilon_2}.
\end{equation*}
Notice that if $m_1=m_2=2$, i.e., $a(D)$ is a second order pseudo-differential operator, then $b=\upsilon_1=\upsilon_2=0$, and the quadrangle $ABCD$ in Figure \ref{fig1} below collapses to the line segment $OC$. Besides, in $\mathbb{R}^n$ we shall use the standard notations: $L^p$ for the Lebesgue space, $L^{p,q}$ for the Lorentz space, $H^1$ for the Hardy space, and $\text{BMO}$ for the space of functions of bounded mean oscillation. Before stating our result, we set
\begin{equation}\label{equ3.555}
\tau(p,q,\epsilon)=
\begin{cases}
\frac{n}{m_1}(\frac1p-\frac1q)+\frac{b}{m_1}-(1-\frac{p_0'}{p'})(1-\frac{1}{m_1})(\frac{n}{s}-\upsilon_1)&\mathrm{if}~s< \frac{n}{\upsilon_1},\\[4pt]
\frac{n}{m_1}(\frac1p-\frac1q)+\frac{b}{m_1}&\mathrm{if}~s> \frac{n}{\upsilon_1},\\[4pt]
\frac{n}{m_1}(\frac1p-\frac1q)+\frac{b}{m_1}-\epsilon&\mathrm{if}~s=\frac{n}{\upsilon_1},
\end{cases}
\end{equation}
where $s=\frac{q(p'-p_0')}{p'-q}$.
\begin{theorem}{\label{th2}}
In addition to the above assumptions, if $\epsilon>0$ and $(p,q)\in\square_{ABCD}$, we have
\begin{equation}\label{equ3.5}
\|W_b(t)\|_{L^p_*-L^q_*}\leq
\begin{cases}
C|t|^{-\frac{n}{m_2}(\frac1p-\frac1q)-\frac{b}{m_2}}&\text{if}~0<|t|<1,\\[4pt]
C|t|^{-\tau(p,q,\epsilon)}&\text{if}~|t|\geq 1,
\end{cases}
\end{equation}
where
\begin{align}\label{equ3.6}
L^p_*-L^q_*=
\begin{cases}
L^1-L^{p_1',\infty}\, \text{or}\, H^1-L^{p_1'}&\text{if}\,\, (p,q)=(1,p_1'),\\[4pt]
L^{p_1,1}-L^\infty\,  \text{or}\, L^{p_1}-\text{BMO}&\text{if}\,\,(p,q)=(p_1,\infty),\\[4pt]
L^p-L^q&\text{otherwise}.
\end{cases}
\end{align}
\end{theorem}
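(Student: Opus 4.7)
The inequality \eqref{equ3.5} is a convolution bound for the kernel $I(t,\cdot)$, whose pointwise behavior is given by Theorem \ref{th1} with $b_1=0$ and $b_2=b$. My plan is to convert these pointwise estimates into $L^r$ (and, at the endpoints, weak-$L^r$) bounds for the kernel, apply Young's convolution inequality together with its Lorentz-space refinement, and fill the interior of the quadrangle $\square_{ABCD}$ by real interpolation.

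\textbf{Small times ($0<|t|<1$).} Theorem \ref{th1} supplies the single-scale pointwise bound
$$|I(t,x)|\leq C|t|^{-(n+b)/m_2}\bigl(1+|t|^{-1/m_2}|x|\bigr)^{-\upsilon_2}.$$
A rescaling $y=|t|^{-1/m_2}x$ yields $\|I(t,\cdot)\|_{L^r}\leq C|t|^{-(n+b)/m_2+n/(rm_2)}$ whenever $r>p_1'=n/\upsilon_2$, with an analogous weak-$L^{p_1'}$ bound at the critical index. Young's inequality (its ordinary form in the interior of $\square_{ABCD}$ and its Lorentz form at the corners $B$ and $D$) then produces the small-time $L^p\to L^q$ estimate with the announced exponent $-(n/m_2)(1/p-1/q)-b/m_2$ at every vertex, and real interpolation extends it to the whole quadrangle.

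\textbf{Large times ($|t|\geq 1$).} Here Theorem \ref{th1} furnishes two distinct regimes, so I split $I(t,\cdot)=K_1+K_2$, with $K_1$ the portion supported in $|x|\leq N|t|$ (governed by the inner $m_1$-scale bound $|I(t,x)|\leq C|t|^{-n/m_1}(1+|t|^{-1/m_1}|x|)^{-\upsilon_1}$) and $K_2$ the portion supported in $|x|>N|t|$ (governed by the $m_2$-scale bound). Rescaling each piece,
$$\|K_1\|_{L^r}\leq C|t|^{-n/m_1+n/(rm_1)}J(r,|t|),\quad J(r,|t|)^r=\int_{|y|\leq N|t|^{(m_1-1)/m_1}}(1+|y|)^{-r\upsilon_1}\,dy,$$
while $\|K_2\|_{L^r}$ is governed by the integral of $(1+|y|)^{-r\upsilon_2}$ over the complementary region. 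The factor $J(r,|t|)$ is $O(1)$ when $r>n/\upsilon_1$, exhibits polynomial growth when $r<n/\upsilon_1$, and carries a logarithmic loss at $r=n/\upsilon_1$. Applying Young's inequality with the standard exponent relation, choosing the slower of the two resulting decays, and tracking this trichotomy across $\square_{ABCD}$ yields the three sub-cases of $\tau(p,q,\epsilon)$; the index $s$ in \eqref{equ3.555} arises as the effective Young exponent that governs which part of the kernel dominates along each edge of the quadrangle.

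\textbf{Endpoints, interpolation, and main obstacle.} At $B=(1,1/p_1')$ and $D=(1/p_1,0)$ the strong $L^{p_1'}$ norm of the kernel fails, so I would use $I(t,\cdot)\in L^{p_1',\infty}$ to obtain the weak endpoints $L^1\to L^{p_1',\infty}$ and, by duality, $L^{p_1,1}\to L^\infty$; the Hardy-space refinements $H^1\to L^{p_1'}$ and $L^{p_1}\to\mathrm{BMO}$ follow from a standard atomic/molecular argument, with the required regularity of $I(t,\cdot)$ obtained from Theorem \ref{th1} applied to $\xi^\alpha\psi$ (which remains in the same symbol classes). Real interpolation between the four vertex estimates then fills $\square_{ABCD}$ with strong-type bounds. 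The principal obstacle is the large-time analysis: producing the sharp exponent $\tau(p,q,\epsilon)$ requires careful optimisation of the two-piece decomposition, and the logarithmic case $s=n/\upsilon_1$ unavoidably forces the $\epsilon$-loss. Maintaining the weak or Hardy endpoints through the final interpolation, so that every interior point of the quadrangle inherits strong-type bounds, is the secondary technical point.
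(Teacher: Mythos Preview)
Your plan has a genuine gap at the vertex $A=(1/p_0,1/p_0')$. Young's inequality with the kernel in $L^r$ requires $1/r=1+1/q-1/p<\upsilon_2/n$, i.e.\ $1/p-1/q>1-\upsilon_2/n$. That condition holds precisely in the triangle $BCD$ (with the weak/Lorentz endpoints at $B$ and $D$), but at $A$ one has $1/p_0-1/p_0'=2b/(n(m_2-2))\le 1-\upsilon_2/n$ under the standing hypothesis $b\le n(m_1-2)/2\le n(m_2-2)/2$. So Young's inequality cannot reach $A$, and real interpolation between $B$, $C$, $D$ only fills the triangle $BCD$, not the full quadrangle. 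You also cannot use an $L^2\to L^2$ bound for $W_b(t)$ itself, since $W_b(t)=e^{ita(D)}\langle D\rangle^b$ is unbounded on $L^2$ when $b>0$.

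The paper closes this gap with Stein's analytic interpolation: it embeds $W_b(t)$ in the holomorphic family $T_{z}(t)u_0=\mathcal F^{-1}(\langle\cdot\rangle^{z}e^{ita(\cdot)})*u_0$, uses the trivial $L^2\to L^2$ bound on the line $\Re z=0$ and the $L^1\to L^\infty$ kernel bound on the line $\Re z=n(m_1-2)/2$ (the latter coming from Theorem~\ref{th1}, with only polynomial growth in $\Im z$), and interpolates to obtain \eqref{equ3.76} at $A$. The formula for $\tau(p,q,\epsilon)$ and the parameter $s=q(p'-p_0')/(p'-q)$ then arise from Riesz--Thorin interpolation between this $L^{p_0}\to L^{p_0'}$ bound and the $L^1\to L^s$ bound from Step~1, not from a Young exponent as you suggest. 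Without the analytic-family device (or some equivalent that circumvents the missing kernel integrability), your argument will not produce the estimate on the triangle $ABD\setminus BCD$.
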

\begin{figure}
	\centering\includegraphics{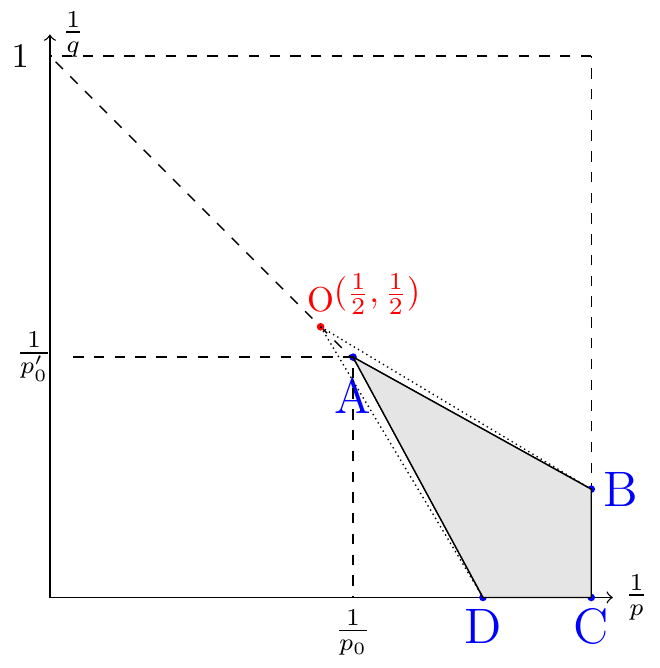}
	\caption{$L^p-L^q$ estimates}\label{fig1}
\end{figure}
\begin{proof}
We divide the proof into the following 2 steps.
	
{\bf Step 1.}  First consider the case that $({1\over p},{1\over q})$ lies in the line segment BC but $(p,q)\ne(1,p_1')$, i.e., $p=1$ and $q\in(p_1',+\infty]$. In this case one checks that $\upsilon_2q>n$. If $0<|t|<1$, by \eqref{eq26} we have
\begin{align*}
\|I(t,\cdot)\|_{L^q}\leq Ct^{-\frac{n+b}{m_2}}\left(\int_{\mathbb{R}^n}{(1+|t|^{-1/{m_2}}|x|)^{-\upsilon_2q}\,dx}\right)^{\frac 1q}\leq C|t|^{-\frac{n}{m_2}(1-\frac1q)-\frac{b}{m_2}}.
\end{align*}
If $|t|\geq1$, also by \eqref{eq26} we have
\begin{align*}
\|I(t,\cdot)\|_{L^q}&\leq Ct^{-\frac{n+b}{m_1}}\left(\int_{|x|< |t|}{(1+|t|^{-1/{m_1}}|x|)^{-\upsilon_1q}\,dx}\right)^{\frac 1q}+Ct^{-\frac{n+b}{m_2}}\left(\int_{|x|\ge |t|}{(1+|t|^{-1/{m_2}}|x|)^{-\upsilon_2q}\,dx}\right)^{\frac 1q}\\
&\leq C\left(|t|^{-\sigma(q, n)}+|t|^{-(\frac{n}{2}-\frac{n}{q})}\right),
\end{align*}
where
\begin{equation}\label{equ3.100}
\sigma(q, n)=
\begin{cases}
\frac{n}{2}-\frac{n}{q}&\text{if}\,\, q< \frac{n}{\upsilon_1},\\[4pt]
\frac{n}{m_1}(1-\frac1q)+\frac{b}{m_1}&\text{if} \,\, q> \frac{n}{\upsilon_1},\\[4pt]
\frac{n}{2}-\frac{n}{q}-\epsilon&\text{if}\,\, q=\frac{n}{\upsilon_1},
\end{cases}
\end{equation}
and $\epsilon>0$. Since $q>\frac{n}{\upsilon_1}$ implies $\frac{n}{m_1}(1-\frac1q)+\frac{b}{m_1}<\frac{n}{2}-\frac{n}{q}$, the Young's inequality gives
\begin{equation}\label{equ3.7}
\|W_b(t)\|_{L^1-L^q}\le C\|I(t,\cdot)\|_{L^q}\leq
\begin{cases}
C|t|^{-\frac{n}{m_2}(1-\frac1q)-\frac{b}{m_2}}&\text{if}\,\,\, 0<|t|< 1,\\[4pt]
C|t|^{-\sigma(q, n)}&\text{if} \,\,\, |t|\ge 1.
\end{cases}
\end{equation}
If $(p,q)=(1,p_1')$, estimate \eqref{equ3.7} with $L^{p_1'}$ replaced by $L^{p_1',\infty}$ (or $L^1$ replaced by $H^1$) follows from the weak Young's inequality (see e.g. \cite[p. 22]{Gro}), Theorem \ref{th1} and the
boundedness of the Riesz potential $f\mapsto|\cdot|^{-n(1-p_1^{-1})}*f$ (see e.g. \cite[p. 3]{Gro2}).
	
{\bf Step 2.} If $(p,q)=(p_0, p_0')$, similar to \cite[Theorem 3.2]{KPG} (also see \cite[Theorem 2]{C1}), let's consider an analytic family of operators $T_z$ with $z=b+iy$ defined by
\begin{equation*}
T_{b+iy}(t)u_0=\mathcal{F}^{-1}\left(\langle\cdot\rangle^{b+iy}e^{ita(\cdot)}\right)\ast u_0,\quad b\in[0,\mbox{$\frac{n(m_1-2)}{2}$}],~y\in \mathbb{R}.
\end{equation*}
Clearly we have $\|T_{iy}\|_{L^2-L^2}\leq C$. On the other hand, the $L^1-L^{\infty}$ estimates for $T_{b+iy}$ where $b=\frac{n(m_1-2)}{2}$ follow from Step 1 above; moreover, from the proof of Theorem \ref{th1}, we see that there is at most a polynomial growth for the parameter $|y|$ in such estimates. Then by the Stein's analytic interpolation theorem (see e.g. \cite[p. 37]{Gro}) we obtain
\begin{equation}\label{equ3.76}
\|W_b(t)\|_{L^{p_0}-L^{p_0'}}\leq
\begin{cases}
C|t|^{-\frac{n}{m_2}(\frac{2}{p_0}-1)-\frac{b}{m_2}}&\text{if}\,\,\, 0<|t|< 1,\\[4pt]
C|t|^{-\frac{n}{m_1}(\frac{2}{p_0}-1)-\frac{b}{m_1}}&\text{if} \,\,\, |t|\ge 1.
\end{cases}
\end{equation}
If $(\frac1p,\frac1q)$ lies in the interior of triangle ABC, we can write $\frac 1p=\frac{\theta}{1}+\frac{1-\theta}{p_0}$ with $\theta=1-\frac{p_0'}{p'}$, and deduce from the Riesz-Thorin interpolation theorem that
\begin{equation*}
\|W_b(t)\|_{L^p-L^q}\leq C\|W_b(t)\|_{L^1-L^s}^{1-\frac{p_0'}{p'}}\|W_b(t)\|_{L^{p_0}-L^{p_0'}}^{\frac{p_0'}{p'}}
\leq
\begin{cases}
C|t|^{-\frac{n}{m_2}(\frac1p-\frac1q)-\frac{b}{m_2}}&\text{if}\,\,\, 0<|t|< 1,\\[4pt]
C|t|^{-\tau(p,q,\epsilon)}&\text{if}\,\, |t|\ge 1,
\end{cases}
\end{equation*}
where $s=\frac{q(p'-p_0')}{p'-q}$, and $s>p_1'$ is obvious by connecting $(\frac1p,\frac1q)$ and A in Figure. On the other hand, when $(\frac1p,\frac1q)$ lies in the edge AB, the above estimate (with $L^s$ replaced by $L^{p_1',\infty}$) follows from the Marcinkiewicz interpolation theorem (see e.g. \cite[p. 31]{Gro}). Therefore estimate \eqref{equ3.5} is valid if $(\frac1p,\frac1q)$ lies in the triangle $ABC$, while the case of triangle $ADC$ follows by duality.
\end{proof}

As an application of Theorem \ref{th2}, we can also establish the closely related Strichartz type estimates. We recall $W(t)$ $(=e^{ita(D)})$ in \eqref{eq3.5}, define $(\mathcal{A}f)(t,\cdot)=\int_{0}^{t}{W(t-\omega)f(\omega,\cdot)d\omega}$ and denote by $L^{p}_{\alpha}=(1-\Delta)^{-\frac{\alpha}{2}}L^p$ the Bessel potential space for some $\alpha>0$.
\begin{corollary}\label{cor3.1}
Under the assumptions before Theorem \ref{th2}, for any fixed $T>0$, we have
\begin{align}
\|W(t)u_0\|_{L^q(\mathbb{R};~L^p_{b/2})}\leq C\|u_0\|_{L^2},\quad&u_0\in L^2\label{eq3.11},\\
\|\mathcal{A}f\|_{L^q((-T,T);~L^p_b)}\leq C\|f\|_{L^{q'}((-T,T);~L^{p'})},\quad&f\in L^{q'}((-T,T);~L^{p'}),\label{equ3.13}
\end{align}
where $p\in[p_0',\frac{2n}{n-m_2+b})$ and $\frac2q=\frac{n}{m_2}(1-\frac2p)+\frac{b}{m_2}$.
\end{corollary}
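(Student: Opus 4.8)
The plan is to deduce both estimates from the dispersive bounds of Theorem~\ref{th2} through the standard $TT^{*}$ argument, so that the only genuinely new input is the identification of a pair of exponents for which Theorem~\ref{th2} yields a time decay of exactly the order required by Hardy--Littlewood--Sobolev. Put $U(t)=\langle D\rangle^{b/2}W(t)$; since $a$ is real valued, $W(t)$ is unitary on $L^{2}$, and since $\langle D\rangle^{b/2}$ is a self-adjoint Fourier multiplier commuting with $W(t)$, one has $U(t)U(s)^{*}=\langle D\rangle^{b}W(t-s)=W_{b}(t-s)$ as a convolution operator, while $\|W(t)u_{0}\|_{L^{p}_{b/2}}=\|U(t)u_{0}\|_{L^{p}}$. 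By the usual duality and $TT^{*}$ reduction, \eqref{eq3.11} is equivalent to $\|T^{*}G\|_{L^{2}}\leq C\|G\|_{L^{q'}_{t}L^{p'}_{x}}$ for $T^{*}G=\int_{\mathbb{R}}U(s)^{*}G(s)\,ds$; and since $\|T^{*}G\|_{L^{2}}^{2}=\langle G,\,(TT^{*})G\rangle\leq\|G\|_{L^{q'}_{t}L^{p'}_{x}}\,\|(TT^{*})G\|_{L^{q}_{t}L^{p}_{x}}$ with $(TT^{*}G)(t)=\int_{\mathbb{R}}W_{b}(t-s)G(s)\,ds$, it suffices to prove the single estimate
\begin{equation*}
\Big\|\int_{\mathbb{R}}W_{b}(t-s)G(s)\,ds\Big\|_{L^{q}_{t}L^{p}_{x}}\leq C\,\|G\|_{L^{q'}_{t}L^{p'}_{x}}.
\end{equation*}

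The first step is to record the pointwise-in-time kernel bound. For $p\in[p_{0}',\tfrac{2n}{n-m_{2}+b})$ the point $(\tfrac1{p'},\tfrac1p)$ lies on the diagonal $AC$ of $\square_{ABCD}$ (it is the vertex $A$ when $p=p_{0}'$, lies in the interior of $\square_{ABCD}$ for $p\in(p_{0}',\infty)$, and never reaches $C$), so Theorem~\ref{th2}, applied with its exponent pair taken as $(p',p)$, gives for $\tau\neq0$
\begin{equation*}
\|W_{b}(\tau)\|_{L^{p'}\to L^{p}}\leq
\begin{cases}
C|\tau|^{-\frac{n}{m_{2}}(1-\frac2p)-\frac{b}{m_{2}}}&\text{if }0<|\tau|<1,\\[3pt]
C|\tau|^{-\tau(p',p,\epsilon)}&\text{if }|\tau|\geq1.
\end{cases}
\end{equation*}
The admissibility relation $\frac2q=\frac{n}{m_{2}}(1-\frac2p)+\frac{b}{m_{2}}$ is precisely the short-time exponent; and on this diagonal the quantity $s=\frac{q(p'-p_{0}')}{p'-q}$ of \eqref{equ3.555} is $+\infty$, so no $\epsilon$-loss occurs and $\tau(p',p,\epsilon)=\frac{n}{m_{1}}(1-\frac2p)+\frac{b}{m_{1}}\geq\frac{n}{m_{2}}(1-\frac2p)+\frac{b}{m_{2}}=\frac2q$ by $m_{1}\leq m_{2}$, $b\geq0$, $p>2$. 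Hence $\|W_{b}(\tau)\|_{L^{p'}\to L^{p}}\leq C|\tau|^{-2/q}$ for all $\tau\neq0$, and $q>2$ since $p<\tfrac{2n}{n-m_{2}+b}$ (the degenerate case $q=\infty$, possible only when $b=0$ and $p=2$, is the elementary energy estimate and is set aside).

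Given this, Minkowski's inequality bounds $\big\|\int_{\mathbb{R}}W_{b}(t-s)G(s)\,ds\big\|_{L^{p}_{x}}$ by $C\,\big(|\cdot|^{-2/q}*\|G(\cdot)\|_{L^{p'}_{x}}\big)(t)$, and the Hardy--Littlewood--Sobolev inequality on $\mathbb{R}$, whose requirements $\tfrac2q\in(0,1)$ and $q'<q$ hold because $q>2$, closes the bilinear estimate and thereby proves \eqref{eq3.11}. For \eqref{equ3.13}, the untruncated operator $f\mapsto\int_{\mathbb{R}}W_{b}(t-\omega)f(\omega)\,d\omega$ is bounded from $L^{q'}_{t}L^{p'}_{x}$ to $L^{q}_{t}L^{p}_{b}$ by the very same computation, and since $q>2>q'$ the Christ--Kiselev lemma promotes it to the time-ordered truncation $\int_{0}^{t}$; restricting the time variable to $(-T,T)$ then gives \eqref{equ3.13}.

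The only step that I expect to need real care is the bookkeeping of the second paragraph: verifying that the pairs $(\tfrac1{p'},\tfrac1p)$ indeed lie in $\square_{ABCD}$ over the whole claimed range of $p$, and that the long-time exponent furnished by Theorem~\ref{th2} dominates $\tfrac2q$ there; in particular that the exceptional line $s=n/\upsilon_{1}$ of \eqref{equ3.555}, along which an $\epsilon$ is lost, is never met on this diagonal. Everything after that is the textbook $TT^{*}$-plus-Hardy--Littlewood--Sobolev scheme, the Christ--Kiselev lemma being the only extra tool needed for the inhomogeneous estimate.
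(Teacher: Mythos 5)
Your proof is correct and follows essentially the same route as the paper: reduce \eqref{eq3.11} to the single bilinear estimate via $TT^{*}$, apply the diagonal dispersive bound of Theorem~\ref{th2} pointwise in time via Minkowski, and close with Hardy--Littlewood--Sobolev. The bookkeeping you were worried about checks out: the pairs $(\tfrac1{p'},\tfrac1p)$ lie on the closed edge $AC$ of $\square_{ABCD}$, the interpolation parameter $s$ of \eqref{equ3.555} degenerates to $+\infty$ there so no $\epsilon$-loss arises, and $m_{1}\le m_{2}$, $b\ge 0$ give $\tau(p',p,\epsilon)\ge 2/q$, so the uniform bound $\|W_{b}(\tau)\|_{L^{p'}\to L^{p}}\le C|\tau|^{-2/q}$ holds and $2/q\in(0,1)$ precisely because $p<\tfrac{2n}{n-m_2+b}$.

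Two small remarks on efficiency rather than correctness. First, the paper keeps the two different decay rates for $|\tau|<1$ and $|\tau|\ge 1$ and quotes the two-exponent Hardy--Littlewood--Sobolev variant of Guo--Peng--Wang; you instead collapse them to the single rate $|\tau|^{-2/q}$ and use the standard HLS. Both are fine here, and your observation that the long-time rate dominates $2/q$ on the diagonal is exactly why the collapse is lossless. Second, the appeal to the Christ--Kiselev lemma for \eqref{equ3.13} is superfluous. After you apply Minkowski the problem becomes a scalar convolution inequality for the nonnegative kernel $|t-\omega|^{-2/q}$ against the nonnegative function $\|f(\omega,\cdot)\|_{L^{p'}}$, so the retarded integral $\int_{0}^{t}$ is trivially dominated by the full convolution over $\mathbb{R}$, and HLS finishes directly. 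Christ--Kiselev would only be needed to pass to non-dual (off-diagonal) inhomogeneous exponent pairs; for the diagonal statement \eqref{equ3.13} as given it is not required, and the paper correspondingly does without it.
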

\begin{proof}
\eqref{eq3.11} follows directly from \eqref{equ3.5}, and we only sketch the proof of \eqref{equ3.13}. In the view of \eqref{equ3.76}, we have
\begin{align}\label{equ3.14}
\|(\mathcal{A}f)(t,\cdot)\|_{L^p_b}\leq& C\int_0^{t}{\|W_b(t-\omega)f(\omega,\cdot)\|_{L^p}d\omega}\nonumber\\
\leq&C\int_0^{t}{|t-\omega|^{-k(t,\omega)}\|f(\omega,\cdot)\|_{L^{p'}}d\omega}
\end{align}
where $k(t,\omega)=\frac{n}{m_2}(\frac{2}{p}-1)+\frac{b}{m_2}$, if $|t-\omega|<1$ and $k(t,\omega)=\frac{n}{m_1}(\frac{2}{p}-1)+\frac{b}{m_1}$, if $|t-\omega|\geq1$. In order to treat the integral \eqref{equ3.14}, we recall the following variant of Hardy-Littlewood-Sobolev inequality (see e.g. \cite[Lemma 2]{GPW}) in dimension one: assume $\eta(t)=|t|^{-\theta_1}$ if $|t|<1$ and $\eta(t)=|t|^{-\theta_2}$ if $|t|\ge1$, where $\theta_1\leq\theta_2$ and $0<\theta_1<1$, then
\begin{align}\label{equ3.15}
\|\eta\ast f\|_{L^{r_2}(\mathbb{R})}\leq C\|f\|_{L^{r_1}(\mathbb{R})},\quad f\in C_c^\infty(\mathbb{R}),
\end{align}
where $1<r_1<r_2<+\infty$, and $\frac{1}{r_1}-\frac{1}{r_2}=1-\theta_1$.

If we set $\theta_1=\frac{n}{m_2}(1-\frac 2p)+\frac{b}{m_2}$ and $\theta_2=\frac{n}{m_1}(1-\frac 2p)+\frac{b}{m_1}$, then $m_1\leq m_2$ implies $\theta_1\leq\theta_2$, and assumption $p<\frac{2n}{n-m_2+b}$ implies $\theta_1<1$. Hence the desired estimate \eqref{equ3.13} follows from \eqref{equ3.14}, \eqref{equ3.15} where $r_1=q'$, and the assumption $\frac2q=\frac{n}{m_2}(1-\frac2p)+\frac{b}{m_2}$.
\end{proof}

We mention that if the phase $a$ does not satisfy the condition \eqref{equ3.1}, we can also establish the same type of estimates as \eqref{equ3.5}. A typical example is that $a$ is an inhomogeneous non-degenerate elliptic polynomial, which has been considered in literatures mentioned in the Introduction. But we would like to consider non-polynomial cases in the following.

\begin{proposition}\label{cor3.2}
Let $a\in C^\infty(\mathbb{R}^n)\cap S^{m}(\Omega)$ be real valued for some $m>1$, where $\Omega=\{\xi\in\mathbb{R}^n;~|\xi|>r_0\}$ for some $r_0>0$. Suppose $a$ satisfies \eqref{eq3} and \eqref{eq4} in $\Omega$, $b\in[0,\frac{n(m-2)}{2}]$, and $(\frac1p,\frac1q)\in\square_{ABCD}$ defined in \eqref{eq3.7} where $m_2=m$, then we have
\begin{align*}
\|W_b(t)\|_{L^p_*-L^q_*}\leq
\begin{cases}
C|t|^{n|\frac 1q+\frac{1}{p}-1|}&\text{if}\,\,\, |t|\geq 1,\\[4pt]
C|t|^{-\frac{n}{m}(\frac1p-\frac1q)-\frac{b}{m}}&\text{if}\,\,\, 0<|t|<1.
\end{cases}
\end{align*}
\end{proposition}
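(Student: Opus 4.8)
The plan is to reduce Proposition \ref{cor3.2} to the already-established machinery by splitting the symbol $\langle\xi\rangle^b$ into a low-frequency piece and a high-frequency piece, since here we only assume the elliptic and non-degeneracy hypotheses on $\Omega=\{|\xi|>r_0\}$ and know nothing about the behaviour of $a$ near the origin. Concretely, I would fix a cutoff $\chi\in C_c^\infty(\mathbb{R}^n)$ with $\chi\equiv1$ on a neighbourhood of $\overline{B(0,r_0)}$, write $W_b(t)=W_b^{\mathrm{low}}(t)+W_b^{\mathrm{high}}(t)$ where the convolution kernels correspond to $\chi(\xi)\langle\xi\rangle^b e^{ita(\xi)}$ and $(1-\chi(\xi))\langle\xi\rangle^be^{ita(\xi)}$ respectively, and treat the two pieces separately.

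For the high-frequency piece $W_b^{\mathrm{high}}(t)$, the symbol $(1-\chi)\langle\cdot\rangle^b$ lies in $S^b(\Omega)$ with $\Omega=\{|\xi|>r_0\}$ and $a\in S_{K+1}^m(\Omega)$ satisfies \eqref{eq3} and \eqref{eq4}, so Lemma \ref{lm1} applies with $t_0=1$. For $0<|t|<1$ estimate \eqref{eq6} gives the kernel bound $|I(t,x)|\leq C|t|^{-\frac{n+b}{m}}(1+|t|^{-1/m}|x|)^{-\mu_b}$, and for $|t|\geq1$ estimate \eqref{eq5} gives the pointwise bound which, integrated in $x$, is even better than what we want. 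Running the $L^q$-norm computation and Young/weak-Young/Riesz-potential arguments exactly as in Steps 1 and 2 of the proof of Theorem \ref{th2} (analytic interpolation of $T_z$, Riesz--Thorin, Marcinkiewicz, duality) then yields, for the high-frequency part, the bound $C|t|^{-\frac{n}{m}(\frac1p-\frac1q)-\frac bm}$ for short times and a bound no worse than $C|t|^{n|\frac1q+\frac1p-1|}$ for long times; in fact the high-frequency part decays for large $t$, so its long-time contribution is harmless.

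For the low-frequency piece $W_b^{\mathrm{low}}(t)$, the symbol $\chi(\xi)\langle\xi\rangle^be^{ita(\xi)}$ is compactly supported in $\xi$, so its kernel is an entire function of $x$ of exponential type and, being a finite-measure Fourier transform, is uniformly bounded: $\|I^{\mathrm{low}}(t,\cdot)\|_{L^\infty_x}\leq C$ uniformly in $t$, while trivially $\|W_b^{\mathrm{low}}(t)\|_{L^2-L^2}\leq C$ uniformly in $t$ as well, since the multiplier $\chi\langle\cdot\rangle^be^{ita}$ is bounded. Interpolating these two endpoint bounds (Riesz--Thorin, plus Marcinkiewicz/duality at the corner points) gives $\|W_b^{\mathrm{low}}(t)\|_{L^p_*-L^q_*}\leq C$ uniformly in $t$ whenever $(\frac1p,\frac1q)$ lies in $\square_{ABCD}$ with $\frac1q\leq\frac1{p'}$, i.e.\ $\frac1q+\frac1p\leq1$; for the part of the quadrangle where $\frac1q+\frac1p>1$ one instead interpolates the $L^2-L^2$ bound against the $L^1-L^1$ bound (the kernel is in $L^1$ only after a polynomial loss in $|x|$, but on a compact $|t|$ range... ) — here one must be a little careful: the honest uniform-in-$t$ bound one actually has below $|t|<1$ is the short-time estimate from Theorem \ref{th1} itself, so for $0<|t|<1$ we may just absorb $W_b^{\mathrm{low}}$ into the same argument as the high-frequency piece, and the only place the factor $|t|^{n|\frac1q+\frac1p-1|}$ is genuinely needed is $|t|\geq1$, where it comes precisely from interpolating the uniform $L^\infty$ kernel bound (hence $\|W_b^{\mathrm{low}}(t)\|_{L^1-L^\infty}\leq C$) with the trivial $L^1-L^1$ and $L^\infty-L^\infty$ bounds that cost a factor $|t|^n$ coming from the kernel's $L^1_x$-norm growing like $|t|^{n/m}\lesssim|t|^n$; a cleaner route is to note $\|W_b^{\mathrm{low}}(t)\|_{L^1-L^1}\leq\|I^{\mathrm{low}}(t,\cdot)\|_{L^1}\leq C|t|^{n/m}$ and $\|W_b^{\mathrm{low}}(t)\|_{L^1-L^\infty}\leq C$, then interpolate along the top and bottom edges and use duality.

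The main obstacle I anticipate is not any single estimate but the bookkeeping at the boundary of $\square_{ABCD}$: the two corner vertices $B=(1,\frac1{p_1'})$ and $D=(\frac1{p_1},0)$ force the use of Lorentz spaces, Hardy space $H^1$ and $\mathrm{BMO}$ (as in \eqref{equ3.6}), so one must check that the low-frequency kernel's $L^\infty_x$ bound, together with the boundedness of the relevant Riesz potential, still delivers the endpoint estimates there; and one must verify that the low-frequency long-time exponent $n|\frac1q+\frac1p-1|$ genuinely dominates (rather than is dominated by) what the interpolation produces on the whole quadrangle, in particular that it is the governing term precisely where the high-frequency decay rate $\tau(p,q,\epsilon)$ from Theorem \ref{th2} is positive — so that the stated maximum of the two is attained by the low-frequency piece for $|t|\geq1$. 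Modulo this care at the corners, the proof is a direct assembly of Lemma \ref{lm1}, the interpolation scheme of Theorem \ref{th2}, and the elementary compact-support bound for the low-frequency part.
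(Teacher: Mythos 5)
Your decomposition into low- and high-frequency pieces is precisely the paper's starting point, and your treatment of $W_b^{\mathrm{high}}$ via Lemma~\ref{lm1} is correct. But your handling of $W_b^{\mathrm{low}}$ has two genuine gaps. First, for $0<|t|<1$ you propose to ``absorb $W_b^{\mathrm{low}}$ into the same argument'' by invoking Theorem~\ref{th1}; this is not available here, since Proposition~\ref{cor3.2} explicitly drops the hypotheses \eqref{equ3.1} and \eqref{eq4} near the origin that Theorem~\ref{th1} requires in $B_1$. Second, your claim $\|I^{\mathrm{low}}(t,\cdot)\|_{L^1}\lesssim|t|^{n/m}$ is unjustified: with no structural control on $a$ near $\xi=0$ beyond smoothness, the exponent $m$ plays no role there. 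The correct and needed bound is $\lesssim |t|^n$ for $|t|\ge1$, which follows from integrating by parts in $\xi$, giving $|I^{\mathrm{low}}(t,x)|\le C_k(1+|t|)^k(1+|x|)^{-k}$ for every $k$, hence $|I^{\mathrm{low}}(t,x)|\le C_k(1+|t|^{-1}|x|)^{-k}$ for $|t|\ge1$ and then $\|I^{\mathrm{low}}(t,\cdot)\|_{L^1}\le C|t|^n$ by taking $k>n$.

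The paper's route avoids separate operator-level interpolation for the two pieces. It uses the Paley--Wiener-type bound above but with the specific (non-integer) choice $k=\mu_b\ge 0$, yielding $|I_1|\le C(1+|t|)^{\mu_b}(1+|x|)^{-\mu_b}$, which, combined with the trivial $k=0$ bound, recasts the low-frequency kernel into the \emph{same shape} as Lemma~\ref{lm1}'s bound for $I_2$: namely $C(1+|t|^{-1}|x|)^{-\mu_b}$ for $|t|\ge1$ and $C|t|^{-(n+b)/m}(1+|t|^{-1/m}|x|)^{-\mu_b}$ for $|t|<1$. One then has a single pointwise estimate for the full kernel and simply reruns the Stein/Riesz--Thorin/Marcinkiewicz/duality scheme of Theorem~\ref{th2}, which automatically dispatches the Lorentz/$H^1$/BMO corners you flag but do not resolve. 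Your observation that $\chi\langle\cdot\rangle^b$ is a bounded multiplier (so $W_b^{\mathrm{low}}$ is $L^2$-bounded without Stein interpolation) is a valid simplification, but to produce the exponent $n|\frac1q+\frac1p-1|$ on the whole quadrangle you still need the uniform $L^p-L^{p'}$ bound on the dual line \emph{together with} the $L^1-L^1$ and $L^\infty-L^\infty$ growth $|t|^n$; once the two errors above are repaired, that interpolation does reproduce the stated exponent.
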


\begin{proof}
It suffices to establish point-wise estimates for $I(t,x)$. We write
$$
\mathcal{F}^{-1}(e^{ita}\langle\cdot\rangle^b)(x)=\mathcal{F}^{-1}(e^{ita}\langle\cdot\rangle^b\phi)(x)+\mathcal{F}^{-1}(e^{ita}\langle\cdot\rangle^b(1-\phi))(x)=I_1+I_2,
$$
where $\phi\in C_c^\infty(\mathbb{R}^n)$ such that $\langle\cdot\rangle^b(1-\phi)\in S^b(\Omega)$. It follows from
Lemma \ref{lm1} that $I_2$ satisfies the estimates \eqref{eq5} and \eqref{eq6}. On the other hand, since $e^{ita}\langle\cdot\rangle^b\psi\in
C_c^\infty(\mathbb{R}^n)$ for every $t\in{\mathbb{R}}$, similar to the standard proof of the Paley-Wiener theorem, we obtain from integration by parts that
$$
|I_1|\le C_k(1+|t|)^k(1+|x|)^{-k}\quad\text{if}~(t,x)\in{\mathbb{R}}\times\mathbb{R}^n~\text{and}~k\in\mathbb{N}_0,
$$
which also implies
$$
|I_1|\le C(1+|t|)^{\mu_b}(1+|x|)^{-\mu_b}\quad{\text{if}}\
(t,x)\in{\mathbb{R}}\times\mathbb{R}^n,
$$
because $\mu_b\geq0$. It is not hard to check from the above facts that
\begin{align*}
|I_1(t,x)|\leq
\begin{cases}
C(1+|t|^{-1}|x|)^{-\mu_b}&\text{if}\,\,\, |t|\geq 1,\\[4pt]
C|t|^{-\frac{n+b}{m}}(1+|t|^{-\frac 1m}|x|)^{-\mu_b}&\text{if}\,\,\, 0<|t|<1,
\end{cases}
\end{align*}
thus
\begin{align*}
|\mathcal {F}^{-1}(e^{ita}\langle\cdot\rangle^b)(x)|\leq
\begin{cases}
C(1+|t|^{-1}|x|)^{-\mu_b}&\text{if}\,\,\, |t|\geq 1,\\[4pt]
C|t|^{-\frac{n+b}{m}}(1+|t|^{-\frac 1m}|x|)^{-\mu_b}&\text{if}\,\,\, 0<|t|<1.
\end{cases}
\end{align*}
Consequently,
\begin{equation*}
\|W_b(t)\|_{L^1-L^q}\leq
\begin{cases}
C|t|^{\frac nq}&\text{if}\,\,\, |t|\ge 1,\\[4pt]
C|t|^{-\frac{n}{m}(1-\frac1q)-\frac{b}{m}}&\text{if}\,\,\, 0<|t|<1.
\end{cases}
\end{equation*}
Now the Proposition follows from the same argument in Theorem \ref{th2}.
\end{proof}

At last, we shall use Theorem \ref{th1} to study the $L^p$ estimates for the Cauchy problem of fractional Schr\"odinger equation
\begin{equation}\label{equ3.20}
	\begin{cases}
		\partial_{t}u =i((-\Delta)^{\alpha}+V(x))u, \quad (t,x)\in \mathbb{R}\times \mathbb{R}^n,\\[4pt]
		u(0,x)=u_{0}(x),
	\end{cases}
\end{equation}
where $(-\Delta)^{\alpha}$ is defined through the Fourier symbol $|\xi|^{2\alpha}$, and $V$ is some complex-valued integrable potential in $\mathbb{R}^n$. We recall that a densely defined linear operator $A$ on a Banach space $X$ is called the generator of a $\beta$-times integrated semigroup for some $\beta\geq0$, if there exist $\omega\geq0$ and an exponentially bounded, strongly continuous family $T(t)$ ($t\ge0$) of bounded linear operators on $X$ such that
$$
(\lambda-A)^{-1}f=\lambda^{\beta}\int_{0}^{\infty}e^{-\lambda t}T(t)fdt,\,\, \,\,\, f\in X,~\lambda>\omega.
$$
\begin{corollary}\label{cor3.6}
Suppose $(-\Delta)^{\alpha}$ and $V\in L^r$ both have maximal domains in $L^p$, for some $\alpha>1$, $1<p\leq2$ and $r\in(\frac{n}{2\alpha},+\infty]\cap R_{p,\alpha}$ where
\begin{equation*}
R_{p,\alpha}=\begin{cases}\left[p,\frac{(2\alpha-1)p}{\alpha(2-p)}\right)&\text{if}\,\,~1<p<\frac{2\alpha-1}{\alpha},\\
\left(\frac{p}{\alpha(2-p)},\frac{(2\alpha-1)p}{\alpha(2-p)}\right)&\text{if}\,\,~\frac{2\alpha-1}{\alpha}\leq p<2,\\
\{+\infty\} &\text{if}\,\,~p=2.
\end{cases}
\end{equation*}
Denote $H=(-\Delta)^{\alpha}+V$ in the sense of operator summation. Then either $iH$ or $-iH$ is a generator of a $\beta$-times integrated semigroup on $L^p$ if $\beta>n|\frac12-\frac1p|+\frac1p$. Further, there exist constants $C,~\omega>0$, such that for every initial data $u_0\in \text{Dom}(\omega+iH)^{\beta}\cap\text{Dom}(\omega-iH)^{\beta}$, the Cauchy problem \eqref{equ3.20} has a unique solution $u\in C(\mathbb{R};~L^p(\mathbb{R}^n))$ satisfying
\begin{align*}
	\|u(t,x)\|_{L^p}\leq\begin{cases}Ce^{\omega |t|}\|(\omega+iH)^{\beta}u_0\|_{L^p}&\text{if}~t\geq0,\\
		Ce^{\omega |t|}\|(\omega-iH)^{\beta}u_0\|_{L^p}&\text{if}~t<0.
	\end{cases}
\end{align*}
\end{corollary}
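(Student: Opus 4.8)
The plan is to reduce the statement to an $L^p$ resolvent estimate for the free operator $(-\Delta)^\alpha$ and then incorporate the potential $V$ by a perturbation argument. The first step is to translate the notion of a $\beta$-times integrated semigroup into a growth condition on the resolvent along vertical lines: a standard generation result (the Arendt--Hille--Yosida type characterization) says that $iH$ generates a $\beta$-times integrated semigroup if, for $\lambda$ in a right half-plane, the resolvent $(\lambda - iH)^{-1}$ exists and satisfies a polynomial bound in $|\Im\lambda|$ of order at most $\beta$. Equivalently, after a Laplace transform, this reduces to controlling $\|e^{ita(D)}\|_{L^p\to L^p}$-type quantities weighted appropriately; more precisely, by writing $(\lambda - i(-\Delta)^\alpha)^{-1} = \int_0^\infty e^{-\lambda t}e^{it(-\Delta)^\alpha}\,dt$ and integrating by parts, the required resolvent bound follows from the dispersive estimate of Theorem \ref{th2} (or Proposition \ref{cor3.2}) applied with $m_1 = m_2 = 2\alpha$, $a(\xi) = |\xi|^{2\alpha}$, on the diagonal pair $(p,p)$ — which lies in the relevant quadrangle for $1 < p \le 2$ — yielding a time-decay/growth rate that integrates against $e^{-\Re\lambda\, t}$ to produce exactly the polynomial loss $n|\tfrac12 - \tfrac1p| + $ (an endpoint $\tfrac1p$ coming from the Young/Sobolev step), which is the source of the exponent $\beta > n|\tfrac12-\tfrac1p| + \tfrac1p$.

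The second step handles $V$. Since $V \in L^r$ with $r > n/(2\alpha)$ and $r \in R_{p,\alpha}$, the multiplication operator by $V$ is relatively bounded (indeed relatively compact, or bounded from a fractional Bessel potential space $L^p_\sigma$ into $L^p$ for suitable $\sigma < 2\alpha$) with respect to $(-\Delta)^\alpha$ on $L^p$; the range $R_{p,\alpha}$ is engineered precisely so that the Sobolev embedding $L^p_{2\alpha - n/r + \delta} \hookrightarrow $ (a space on which multiplication by $L^r$ maps back to $L^p$) holds with a positive gain. One then invokes a perturbation theorem for integrated semigroups: if $A$ generates a $\beta$-times integrated semigroup and $B$ is $A$-bounded with sufficiently small relative bound on a core, or more robustly if $B(\lambda - A)^{-1}$ is a small (or compact with the right decay) perturbation along the vertical lines used above, then $A + B$ generates a $(\beta + \epsilon)$-times (or still $\beta$-times, with care) integrated semigroup, with the same exponential type up to adjusting $\omega$. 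This gives generation of the integrated semigroup for $iH$ (or $-iH$, depending on which sign of the resolvent set one can reach — only one of $\pm i(-\Delta)^\alpha$ has the half-plane in its resolvent set after the potential perturbation, which accounts for the dichotomy in the statement).

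The third step extracts the solution of \eqref{equ3.20} and its bound. Once $iH$ generates a $\beta$-times integrated semigroup $T(t)$ of exponential type $\omega$, the abstract theory guarantees that for $u_0 \in \mathrm{Dom}((\omega + iH)^\beta)$ the function $u(t) = \tfrac{d^{\lceil\beta\rceil}}{dt^{\lceil\beta\rceil}}T(t)u_0$ (interpreted via fractional powers when $\beta \notin \mathbb{N}$, or via the standard formula $u(t) = (\omega + iH)^\beta T(t)(\omega+iH)^{-\beta}u_0$ appropriately regularized) solves the Cauchy problem, lies in $C(\mathbb{R};L^p)$, is unique, and obeys $\|u(t)\|_{L^p} \le C e^{\omega|t|}\|(\omega + iH)^\beta u_0\|_{L^p}$; the backward-in-time part uses the $-iH$ generation statement symmetrically.

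The main obstacle I anticipate is the second step — verifying that multiplication by an $L^r$ potential is a genuinely admissible perturbation in the $L^p$ setting (as opposed to the much easier $L^2$ setting, where self-adjointness and Kato--Rellich are available). One must pin down the correct mapping properties of $V\cdot$ between Bessel potential spaces via Hölder and Sobolev embedding, check that the relative bound can be made $<1$ (or that the relevant operator is compact) uniformly along the vertical line $\Re\lambda = \omega$ for $\omega$ large, and confirm that the loss incurred does not exceed the budget allowed by $\beta > n|\tfrac12 - \tfrac1p| + \tfrac1p$; the precise shape of the admissible range $R_{p,\alpha}$, including the split at $p = \tfrac{2\alpha-1}{\alpha}$, is exactly the bookkeeping that makes this work, and getting those inequalities tight is the delicate part. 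The first step, by contrast, is essentially a repackaging of Theorem \ref{th2}/Proposition \ref{cor3.2} through the Laplace transform, and the third step is a direct citation of the integrated-semigroup machinery (e.g. Arendt, or Hieber).
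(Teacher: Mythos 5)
Your overall architecture is right — reduce to a perturbation theorem for integrated semigroups, derive the needed resolvent estimate from the dispersive kernel bounds, and invoke the abstract Cauchy theory for the solution statement — but two of the hinges are misplaced in a way that would derail the argument if carried out as written.

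First, you attribute the extra $+\tfrac1p$ in $\beta > n|\tfrac12-\tfrac1p| + \tfrac1p$ to ``a Young/Sobolev step'' in the Laplace-transform computation for the \emph{free} operator. In the paper that $+\tfrac1p$ is not generated that way: the free operator $i(-\Delta)^\alpha$ already generates a $\beta$-times integrated semigroup on $L^p$ for every $\beta > n|\tfrac12-\tfrac1p|$ (Hieber, \cite[Theorem~4.2]{Hieber}), so nothing is lost there. The loss of $\tfrac1p$ is exactly the price charged by the Kaiser--Weis perturbation theorem \cite[Theorem~3.3(a)]{KW}: given a $\beta$-times integrated semigroup for $A$ on $L^p$ and a perturbation $B$ with $\|B(\lambda-A)^{-1}\|_{L^p\to L^p}\le M<1$ on a half-plane, one concludes generation for $A+B$ only for $\beta > n|\tfrac12-\tfrac1p| + \tfrac1p$. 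If you instead try to win the extra $\tfrac1p$ back by a sharper resolvent bound for the free operator, you will not match the stated threshold.

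Second, and more substantively, the estimate you propose to feed into the perturbation step is the diagonal $L^p\to L^p$ dispersive estimate, but that is not what the smallness condition $\|iV(\lambda - i(-\Delta)^\alpha)^{-1}\|_{L^p\to L^p}<1$ requires. The paper uses the \emph{off-diagonal} dispersive estimate $\|e^{it(-\Delta)^\alpha}\|_{L^p\to L^q}\le C|t|^{-\frac{n}{2\alpha}(\frac1p-\frac1q)}$ from Theorem~\ref{th2} with $m_1=m_2=2\alpha$, $b=0$, and with $(\tfrac1p,\tfrac1q)$ chosen so that $\tfrac1p-\tfrac1q=\tfrac1r$. The role of $R_{p,\alpha}$ is precisely to guarantee that such a $q$ exists with $(\tfrac1p,\tfrac1q)$ in the interior of the admissible region (so that the decay rate is uniform in $t$), and $r>\tfrac{n}{2\alpha}$ then makes $\frac{n}{2\alpha}(\tfrac1p-\tfrac1q)<1$, so the Laplace integral converges and yields $\|(\lambda - i(-\Delta)^\alpha)^{-1}\|_{L^p\to L^q}\le C|\Re\lambda|^{\frac{n}{2\alpha r}-1}$. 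One then closes with a single application of H\"older's inequality, $\|Vf\|_{L^p}\le\|V\|_{L^r}\|f\|_{L^q}$, to get the $L^p\to L^p$ smallness after taking $\Re\lambda$ large. Your alternative route through Bessel potential spaces $L^p_\sigma$ and Sobolev embedding is not inherently wrong — morally it encodes the same H\"older/Sobolev bookkeeping — but you do not carry it out, and as presented it obscures why the exponent $\frac{n}{2\alpha r}-1$ is negative and why $R_{p,\alpha}$ has the shape it does. The third step (citing van~Neerven--Straub \cite[Theorem~1.1]{VS} for the well-posedness and norm bound) you describe accurately.

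So: fix the attribution of the $+\tfrac1p$ to the Kaiser--Weis perturbation theorem, replace the diagonal $L^p\to L^p$ bound with the off-diagonal $L^p\to L^q$, $\tfrac1p-\tfrac1q=\tfrac1r$, bound in the resolvent estimate, and the argument will line up with the paper's.
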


\begin{proof}
For the first statement we only prove it with $iH$, and the other case follows by the same argument. We shall need the following result due to Kaiser and Weis \cite[Theorem 3.3 (a)]{KW}: assume that $(A,\text{Dom}(A))$ is the generator of a $\beta$-times integrated semigroup on $L^p$ $(1<p\leq2)$, $(B,\text{Dom}(B))$ is a linear operator on $L^p$ such that $\text{Dom}(A)\subset \text{Dom}(B)$, and there are constants $M,\omega_0\ge 0$ such that
\begin{align}\label{equ3.17}
\|B(\lambda-A)^{-1}\|_{L^p-L^p}\leq M<1\quad\text{if}\,\, \Re \lambda>\omega_0,
\end{align}
then $(A+B, \text{Dom}(A))$ is the generator of a $\beta$-times integrated semigroup on $L^p$ for any $\beta>n|\frac12-\frac1p|+\frac1p$.

Note that $i(-\Delta)^{\alpha}$ generates a $\beta$-times integrated semigroup on $L^p$ if $\beta>n|\frac12-\frac1p|$ (see \cite[Theorem 4.2]{Hieber}). Hence it suffices to show that $\text{Dom}(A)\subset \text{Dom}(B)$ and \eqref{equ3.17} are true for $A=i(-\Delta)^{\alpha}$ and $B=iV$. We first observe that
\begin{align*}
(\lambda-i(-\Delta)^{\alpha})^{-1}f=\int_{0}^{\infty}e^{-\lambda t}e^{it(-\Delta)^{\alpha}}fdt,\quad \text{if}~\Re\lambda>0,~f\in C_c^{\infty}(\mathbb{R}^n).
\end{align*}
Applying Theorem \ref{th2} with $m_1=m_2=2\alpha$ and $b=0$, we obtain if $(\frac1p,\frac1q)$ lies in the interior of the quadrangle $\Box_{OBCD}$ in figure \ref{fig1}, that $s=\frac{q(p'-p_0')}{p'-q}>\frac{n}{\upsilon_1}$, and therefore
$$
\|e^{it(-\Delta)^{\alpha}}\|_{L^p-L^q}\leq C|t|^{-\frac{n}{2\alpha}(\frac1p-\frac1q)},\quad\text{if}\,\, t\ne 0.
$$
Since $r\in R_{p,\alpha}$, we can choose the above $p$ and $q$ such that $\frac1p-\frac1q=\frac1r$, and have $\frac{n}{2\alpha}(\frac1p-\frac1q)=\frac{n}{2\alpha}\cdot\frac 1r<1$ by the assumption $r>\frac{n}{2\alpha}$, which implies
\begin{align*}
\|(\lambda-i(-\Delta)^{\alpha})^{-1}\|_{L^p-L^q}&\leq \int_{0}^{\infty}e^{-\Re\lambda t}t^{-\frac{n}{2\alpha}(\frac1p-\frac1q)} dt\nonumber=C|\Re\lambda|^{\frac{n}{2\alpha}(\frac1p-\frac1q)-1}.
\end{align*}
Then by H\"{o}lder's inequality, we have $\text{Dom}(i(-\Delta)^{\alpha})\subset\text{Dom}(iV)$, and there exists $\omega_0\ge1$ such that
\begin{align*}
\|iV(\lambda-i(-\Delta)^{\alpha})^{-1}\|_{L^p-L^p}&\leq \|V\|_{L^q-L^p}\|(\lambda-i(-\Delta)^{\alpha})^{-1}\|_{L^p-L^q}\\
&\leq \|V\|_{L^r}|\Re\lambda|^{\frac{n}{2\alpha}(\frac1p-\frac1q)-1}\\
&\leq \mbox{$\frac12$}\quad\quad \text{if}~|\Re\lambda|\ge \omega_0,
\end{align*}
where the last inequality comes form the assumption $r>\frac{n}{2\alpha}$ again. The second statement then follows by the first statement and van Neerven and Straub \cite[Theorem 1.1]{VS}, which completes the proof.
\end{proof}

\begin{remark}\label{rk3.1}
We note that the above Corollary focuses on concerning complex-valued potentials, while there are better results for real-valued potentials, whose proofs are totally different. In the free case, i.e., $V=0$, studies on Fourier multipliers give (see e.g. \cite[Theorem 4.1]{Mi3}) for every fixed $\omega\in\mathbb{C}\setminus(-\infty,0]$ that
\begin{equation}\label{equ3.21}
\|e^{it(-\Delta)^{\alpha}}(\omega+(-\Delta)^{\alpha})^{- \beta}\|_{L^p-L^p}\leq C(1+|t|)^{n|\frac12-\frac1p|},\quad t\in\mathbb{R},
\end{equation}
if $\alpha>0$, $\beta\geq n|\frac12-\frac1p|$ when $1<p<+\infty$, and $\beta>\frac n2$ when $p=1$. Notice that the range of $\beta$ is sharp if $\alpha\neq\frac12$. For some real-valued $V$, or even more generally, a self-adjoint operator $H$ in $L^2$, if $\{e^{-zH}\}_{\Re z>0}$ is a bounded analytic semigroup on $L^2$ whose kernel satisfies for some $m\in\mathbb{N}_+$ the Gaussian type estimate
\begin{equation*}
|K(t,x,y)|\leq c_{1}t^{-\frac{n}{2m}}\exp\left\{-c_{2}\frac{|x-y|^{\frac{2m}{2m-1}}}{t^{\frac{1}{2m-1}}}\right\},\quad\text{if}~ t>0,~x,~y\in\mathbb{R}^n,
\end{equation*}
then Carron, Coulhon and Ouhabaz \cite[Theorem 5.3]{CCO} shows that $\pm iH$ with maximal domain generates a $\beta$-times integrated semigroup on $L^p$ if $\beta>n|\frac12-\frac1p|$, and a slightly weaker estimate than \eqref{equ3.21} with $(-\Delta)^\alpha$ replaced by $\pm H$ holds. We also note that recently, Ancona and Nicona \cite{DN} can weaken the assumption on point-wise heat kernel estimate but obtain sharp $L^p$ estimate in the form of \eqref{equ3.21} with $(-\Delta)^\alpha$ replaced by $\pm H$.
\end{remark}

\noindent
{\bf Acknowledgements:} The first author was visiting The University of Chicago while this research was partly carried out, supported by the China Scholarship Council. The third author was supported by the National Natural Science Foundation of China (No. 11471129).


\end{document}